\title{Second Countable Virtually Free Pro-$p$ Groups whose Torsion Elements have Finite Centralizer}
\author{J.W. MacQuarrie\footnote{Universidade Federal de Minas Gerais, john@mat.ufmg.br, corresponding author}, P.A. Zalesskii\footnote{Universidade de Bras\'\i lia, pz@mat.unb.br}
\thanks{Research partially supported by the Conselho Nacional de Desenvolvimento Cient\'\i fico e Tecnol\'ogico (CNPq)}}
\newif\iftest % Testversion / Finalversion
\newif\ifteston
\newif\ifnotes % notes enabled
\newif\iffinal
\newcounter{notes}
\newcommand{\ignoriere}[1]{}
\def\plabel#1{\label{#1}\iftest\fbox{#1 }\fi}
\def\pref#1{\ref{#1}\iftest{ \fbox{#1 }}\fi}
\def\pcite#1{\cite{#1}\iftest{\fbox{#1 }}\fi}
\newcommand{\prefeq}[1]{Eq.(\pref{e-#1})}
\newcommand{\preflemma}[1]{Lemma \pref{l-#1}}
\newcommand{\prefdef}[1]{Definition \pref{d-#1}}
\newcommand{\prefprop}[1]{Proposition \pref{p-#1}}
\newtheorem{theorem}{Theorem}[section]
\newtheorem{lemma}[theorem]{Lemma}
\newtheorem{definition}[theorem]{Definition}
\newtheorem{proposition}[theorem]{Proposition}
\newtheorem{corollary}[theorem]{Corollary}
\newtheorem{remark}[theorem]{Remark}
\newcommand{\res}[1]{\hspace{-0.6mm}\downarrow_{\hspace{-0.25mm}{#1}}}
\newcounter{claims}
\newcommand{\bd}[1]{\begin{definition}\plabel{d-#1}\rm}
\newcommand{\ed}{\end{definition}}
\newcommand{\bt}[1]{\begin{theorem}\plabel{t-#1}\setcounter{claims}0}
\newcommand{\et}{\end{theorem}}
\newcommand{\bl}[1]{\begin{lemma}\plabel{l-#1}\setcounter{claims}0}
\newcommand{\el}{\end{lemma}}
\newcommand{\bc}[1]{\begin{corollary}\plabel{c-#1}}
\newcommand{\ec}{\end{corollary}}
\newtheorem{notation}{Notation}
\newcommand{\bn}[1]{\begin{notation}\plabel{n-#1}\rm}
\newcommand{\en}{\end{notation}}
\newcommand{\brem}[1]{\begin{remark}\plabel{rem-#1}\rm}
\newcommand{\erem}{\end{remark}}
\newcommand{\bp}[1]{\begin{proposition}\plabel{p-#1}\setcounter{claims}0}
\newcommand{\ep}[1]{\end{proposition}}
\newcommand{\be}[1]{\begin{equation}\plabel{e-#1}}
\newcommand{\ee}{\end{equation}}
\newlength{\help}\setlength{\help}{0cm}
\newcounter{subclaims}
\newenvironment{env}{}{}
\newcommand{\bcl}{\medskip\addtocounter{claims}1\setcounter{subclaims}0{
\noindent{\it Claim \arabic{claims}: } }
                  \begin{env}\it}
\newcommand{\ecl}{\end{env}\rm\medskip}
\newcommand{\bscl}
  {\addtocounter{subclaims}1{Subclaim \arabic{subclaims}}\begin{env}\it}
\newcommand{\escl}{\end{env}\rm\smallskip}
\newcommand{\claimproof}{{}}
\newcounter{inrmlist}
\newcounter{inalphlist}
\newenvironment{rmenumerate}{
                       \setcounter{inrmlist}{0}
                       \begin{list}
                         {\makebox[2em]{(\roman{inrmlist})}}{
                         \addtolength{\leftmargin}{1em}
                         \addtolength{\itemsep}{1mm}
                         \addtolength{\itemindent}{-2em}\usecounter{inrmlist}}
                       }%
                       {\end{list}}
\newcommand{\N}{{\bf N}}
\newcommand{\tF}{\tilde F}
\newcommand{\tG}{\tilde G}
\newcommand{\Z}{\mathbb{Z}}
\newcommand{\F}{\mathbb{F}}
\newcommand{\cex}{counter-example}
\newcommand{\epi}{epimorphism}
\newcommand{\fpp}{free \pp}
\newcommand{\fppgrp}{free \ppgrp}
\newcommand{\pgraph}{profinite graph}
\newcommand{\pp}{pro-$p$}
\newcommand{\ppgrp}{\pp\ group}
\newcommand{\mono}{monomorphism}
\newcommand{\vfpp}{virtually \fpp}
\newcommand{\vfppgrp}{\vfpp\ group}
 \newcommand{\pCgrp}{pro-$p$ group}
\newcommand{\cC}{{\cal C}}
\newcommand{\ind}[1]{\hspace{-0.6mm}\uparrow^{\hspace{-0.25mm}{#1}}}
\newcommand{\ctens}{\widehat{\otimes}}
\newcommand{\onto}{\twoheadrightarrow}
\newcommand{\Hom}{\textnormal{Hom}}
\newcommand{\tn}[1]{\textnormal{#1}}
\newcommand{\limproj}[1]{{\lim\limits_{\displaystyle\longleftarrow}}_{#1}}
\newcommand{\tor}[1]{\text{Tor}(#1)}             % Torsion
\newcommand{\gp}[1]{\langle #1 \rangle}             % Group generated by #1
\newcommand{\torgp}[1]{\gp{\tor{#1}}}              % Group generated by torsion
\newcommand{\torfactor}[1]{#1/\torgp{#1}}           % factorgp G/ tor G
\newcommand{\ugp}{\{1\}}                          % Trivial group
\newcommand{\cG}{{\cal G}}
\newcommand{\cK}{{\cal K}}
\newcommand{\C}{{\cal C}}
\def\aname#1#2{\ifodd\names{#1.\,#2}\else{#2 #1.}\fi}
\newcommand{\segal}{\aname{D}{Segal}}
\newcommand{\shalev}{\aname{A}{Shalev}}
\def\cK{{\cal K}}
\def\Fin#1{{\it Fin(#1)}}
\def\HNN{{\it HNN}}
\def\HNNgrp{HNN-group}
\def\HNNext{HNN-extension}
\def\KST{Kurosh subgroup theorem}
\newcommand{\homo}{homomorphism}
\def\KST{Kurosh subgroup theorem}
\begin{document}
\maketitle

\noindent MSC classification: 20E18

\noindent Key-words: Profinite group, pro-$p$ group, HNN-extension, profinite module.

\begin{abstract}
A second countable \vfppgrp\ all of whose torsion elements have
finite centralizer is the \fpp\ product of finite $p$-groups and a
\fpp\ factor.  The proof explores a connection between $p$-adic
representations of finite $p$-groups and virtually free pro-$p$
groups.  In order to utilize this connection, we first prove a
version of a remarkable theorem of A. Weiss for infinitely
generated profinite modules that allows us to detect freeness of
profinite  modules.  The proof now proceeds using techniques
developed in the combinatorial theory of profinite groups. Using
an HNN-extension we embed our group into a semidirect product
$F\rtimes K$ of a free pro-$p$ group $F$ and a finite $p$-group
$K$ that preserves the conditions on centralizers and such that
every torsion element is conjugate to an element of $K$. We then
show that the $\Z_pK$-module $F/[F,F]$ is free using the detection
theorem mentioned above. This allows us to deduce the result for
$F\rtimes K$, and hence for our original group, using the pro-$p$
version of the Kurosh subgroup theorem.
\end{abstract}

\section{Introduction}

The objective of this paper is to give a complete description of a
second countable virtually free pro-$p$ group whose torsion
elements have finite centralizer. The description is a
generalization of the main theorem of \cite{bulletin}, where the
result was obtained in the finitely generated case. Note however
that finite generation is a rather restrictive condition in Galois
theory and that the finite centralizer condition for torsion
elements arises naturally in the study of maximal pro-$p$ Galois
groups. In particular, D.\,Haran \cite{H93} (see also I.\,Efrat in
\cite{E96} for a different proof, or \cite[Proposition
19.4.3]{efrat}) proved Theorem \ref{main} in the case where $G$ is
an extension of a free pro-$2$ group with a group of order $2$.

Our main result is the following:

 \begin{theorem}\label{main}
Let $G$ be a second countable \vfppgrp\ such that the centralizer of
every torsion element in $G$ is finite. Then $G$ is a \fpp\
product of subgroups that are finite or \fpp.
\end{theorem}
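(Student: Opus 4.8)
The plan is to follow the route sketched in the abstract: replace $G$ by a tractable overgroup, reduce the whole problem to a freeness statement in the $p$-modular representation theory of the finite quotient, and then transport the conclusion back to $G$ via the Kurosh subgroup theorem. First I would dispose of the torsion by passing to an HNN-extension. A second countable \vfppgrp\ acts on a \pp\ tree with finite vertex stabilizers, so its maximal finite subgroups are controlled but may fall into many conjugacy classes, and the defining extension $1 \to F_0 \to G \to Q \to 1$ (with $F_0$ open free \pp\ and $Q$ a \pgrp) need not split. The purpose of the HNN construction is to embed $G$ into a group $\tilde G = F \rtimes K$ in which $F$ is free \pp, $K$ is a \pgrp, the extension splits, and every torsion element is conjugate into the complement $K$; one must arrange this so that the finite-centralizer hypothesis is inherited by $\tilde G$. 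Since $\tilde G$ retracts onto $K$ and all torsion is conjugate into $K$, the hypothesis becomes a condition on the action of $K$ on $F$: every nontrivial $k \in K$ must act on $F$ with trivial fixed points (an element centralizing an infinite part of $F$ would have infinite centralizer).

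The heart of the argument is to prove that the abelianization $M = F/[F,F]$ is a \emph{free} $\mathbb{Z}_p K$-module. This is where second countability enters and where the infinitely generated analogue of Weiss's theorem is indispensable: $M$ is a profinite $\mathbb{Z}_p K$-module that is in general not finitely generated, so the classical rigidity theorem does not apply directly, and freeness must instead be detected through the behaviour of $M$ on fixed points and coinvariants of the subgroups of $K$. The centralizer condition supplies exactly the needed input, since the fixed-point-free action of each nontrivial $k \in K$ on $F$ forces the fixed-point (resp.\ coinvariant) modules to have the permutation/freeness property that the detection theorem consumes. Granting that theorem, $M$ is $\mathbb{Z}_p K$-free.

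Freeness of $M = F^{\mathrm{ab}}$ is then upgraded to a splitting of $\tilde G$ as a free \pp\ product. A semidirect product $F \rtimes K$ whose abelianization is free over $\mathbb{Z}_p K$ is isomorphic to a free \pp\ product $F' \amalg K$ with $F'$ free \pp\ (the model case being $K=C_p$ cyclically permuting a free basis, where $F \rtimes K \cong \mathbb{Z}_p \amalg C_p$); this recognition is itself a consequence of the \pp\ analogue of Bass--Serre/Kurosh theory. Finally $G$ sits inside $\tilde G \cong F' \amalg K$ as a closed subgroup, and the \pp\ Kurosh subgroup theorem---applicable precisely because $G$ is second countable---expresses any closed subgroup of a free \pp\ product as a free \pp\ product of a free \pp\ group together with the intersections of $G$ with conjugates of the free factors. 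Each such intersection is a subgroup of a free \pp\ group (hence free \pp) or of $K$ (hence finite), which is exactly the asserted decomposition of $G$.

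I expect the main obstacle to be the infinitely generated version of Weiss's theorem: passing from finitely generated $\mathbb{Z}_p K$-lattices to arbitrary second countable profinite $\mathbb{Z}_p K$-modules requires controlling an inverse limit of finitely generated approximations while preserving the fixed-point and freeness data, and it is here that second countability is genuinely used. A secondary difficulty is checking that a single HNN-embedding can simultaneously produce a split extension, concentrate all torsion in $K$, and preserve the finite-centralizer hypothesis, since all three properties must hold at once for the module-theoretic machinery to apply.
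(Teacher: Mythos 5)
Your outline reproduces the paper's architecture faithfully: an HNN-embedding of $G$ into $\tilde G = \tilde F\rtimes K$ with all torsion conjugate into $K$ and $C_{\tilde F}(t)=1$, freeness of the $\Z_pK$-module $\tilde F/[\tilde F,\tilde F]$ via the infinitely generated Weiss-type theorem, recognition of $\tilde G$ as $K\amalg F_0$, and descent to $G$ by the pro-$p$ Kurosh subgroup theorem. The genuine gap is in your middle step, where you assert that ``the fixed-point-free action of each nontrivial $k\in K$ on $F$ forces the fixed-point (resp.\ coinvariant) modules to have the permutation/freeness property that the detection theorem consumes.'' No such direct implication exists. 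Theorem \ref{infgenfree} needs two hypotheses for a normal subgroup $N$ of $K$: that $U=F/[F,F]$ restricted to $N$ is $\Z_pN$-free, and that $U^N$ is $\Z_p[K/N]$-free; the condition $C_F(k)=1$ says nothing about either (indeed $U^N$ is typically huge, and its freeness over $\Z_p[K/N]$ is exactly as hard as the problem you started with). The paper obtains these hypotheses by induction on $|K|$ (Proposition \ref{p-model1}). The base case $|K|=p$ is an external, nontrivial prior theorem, \cite[Theorem 1.2]{Z2003}, which your proposal never invokes. In the inductive step, for $H$ a central subgroup of $K$ of order $p$, the base case gives $F\rtimes H = H\amalg F_1$, and it is this free-product decomposition---$H$ permutes the Kurosh factors $F_1^h$ of $F$---that makes $U$ free over $\Z_pH$; similarly, applying the inductive hypothesis to $\bar G = G/H_G = \bar F\rtimes(K/H)$ and translating via Lemma \ref{group-module connection} and Lemma \ref{inv iso coinv} ($U_H\cong U^H$) gives freeness of $U^H$ over $\Z_p[K/H]$. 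So the module-theoretic input comes from the group-theoretic theorem at smaller orders, not from the centralizer condition per se; without the induction and its base case, your step 2 has no proof.

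Two smaller corrections. Second countability does not enter where you say it does: the freeness-detection Theorem \ref{infgenfree} requires no countability hypothesis (the paper notes this explicitly; only the permutation-module variant, Theorem \ref{infgen}, uses Lemma \ref{indexing sets coincide}, and that variant is not needed for the main theorem). It is needed for the Kurosh subgroup theorem applied to $G$, which is a closed but in general not open subgroup of $\tilde G$, and it is already essential in the base case from \cite{Z2003} (the counterexample of \cite[Section 4]{HZ} occurs there). Also, your ``recognition principle''---that $F\rtimes K$ with $F^{\mathrm{ab}}$ free over $\Z_pK$ is automatically a free product $F'\amalg K$---is not a quotable consequence of Bass--Serre/Kurosh theory; the paper proves it by building a comparison map $\phi:K\amalg\tilde F_0\to G$, establishing surjectivity by a Frattini argument and injectivity by matching free bases of abelianizations, and this construction already presupposes that all torsion of $G$ is conjugate into $K$ (via \cite[Proposition 1.7]{Z2003}) in order to produce the free complement $F_0$ at all.
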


The proof of Theorem \ref{main} explores a connection between
$p$-adic representations of finite $p$-groups and virtually free
pro-$p$ groups.  One of the main ingredients of the proof is the
following result, which can be considered to be a first step
towards a generalization of a remarkable theorem of A. Weiss
\cite[Theorem 2]{weiss} to infinitely generated pro-$p$ modules.

\begin{theorem}\label{infgenfree}
Let $G$ be a finite $p$-group and let $U$ be a profinite $\Z_p
G$-lattice.  Suppose that there is a normal subgroup $N$ of $G$
such that
 \begin{itemize}
  \item The restriction $U\res{N}$ is a free $\Z_p N$-module,
  \item The module $U^N$ of $N$-fixed points is a free $\Z_p[G/N]$-module.
 \end{itemize}
Then $U$ itself is a free $\Z_pG$-lattice.
\end{theorem}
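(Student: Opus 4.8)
The plan is to present $U$ as the image of its projective cover and then show that the covering map is already an isomorphism. Since $G$ is a finite $p$-group, $\Z_pG$ is a local profinite ring with residue field $\F_p$, so projective profinite $\Z_pG$-modules are free and the profinite version of Nakayama's lemma is available. In particular $U$ admits a projective cover, giving a short exact sequence $0\to K\to P\onto U\to 0$ with $P$ free and $K\subseteq\rad(\Z_pG)P$. As a closed submodule of the $\Z_p$-torsion-free module $P$, the kernel $K$ is again a $\Z_pG$-lattice; writing $P=\Z_pG\ctens_{\Z_p}W$ for a free profinite $\Z_p$-module $W$, one checks at once that $P$ satisfies both hypotheses, since $P\res{N}$ is free over $\Z_pN$ and $P^N\cong\Z_p[G/N]\ctens_{\Z_p}W$ is free over $\Z_p[G/N]$. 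Thus $U$ is free precisely when $K=0$, and the whole argument is devoted to proving the latter.

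First I would transport the hypotheses to $K$. Restricting the defining sequence to $N$ gives a short exact sequence of profinite $\Z_pN$-modules whose outer terms $P\res{N}$ and $U\res{N}$ are free; as $U\res{N}$ is projective the sequence splits over $\Z_pN$, so $K\res{N}$ is a direct summand of a free module, hence free, and moreover $H^1(N,K)=0$. Feeding this vanishing into the long exact sequence of continuous $N$-cohomology of $0\to K\to P\to U\to 0$ yields a short exact sequence of $\Z_p[G/N]$-modules
\[
0\to K^N\to P^N\to U^N\to 0,
\]
in which $P^N$ and $U^N$ are free. Because $U^N$ is projective this sequence also splits, exhibiting $K^N$ as a direct summand of $P^N$.

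The decisive point is to locate $K^N$ inside $P^N$. Since $N$ is normal, the norm element $\nu_N=\sum_{n\in N}n$ is central in $\Z_pG$, and for the $N$-free module $K$ one has $K^N=\nu_NK$. Multiplying the inclusion $K\subseteq\rad(\Z_pG)P$ by $\nu_N$ gives $K^N=\nu_NK\subseteq\nu_N\rad(\Z_pG)\,P$, and under the identification $P^N\cong\Z_p[G/N]\ctens_{\Z_p}W$ the central subset $\nu_N\rad(\Z_pG)\subseteq(\Z_pG)^N$ corresponds to the image of $\rad(\Z_pG)$ under the projection $\Z_pG\onto\Z_p[G/N]$, which is exactly $\rad(\Z_p[G/N])$ (both are the preimage of $p\Z_p$ under the augmentation). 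Hence $K^N\subseteq\rad(\Z_p[G/N])P^N$. But a direct summand of a profinite module that is contained in its radical is zero by Nakayama, so $K^N=0$; and freeness of $K\res{N}$ then forces $K=0$, since a nonzero free $\Z_pN$-module has nonzero fixed points $\nu_NK$. Therefore $U\cong P$ is free.

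I expect the main work to be foundational rather than conceptual: one must ensure that the infinitely generated profinite analogues of the classical tools are in place — existence of projective covers (obtained by lifting a topological $\F_p$-basis of $U/\rad(\Z_pG)U$), the identity that projective profinite $\Z_pG$-modules are free over the local ring $\Z_pG$, and Nakayama's lemma for arbitrary profinite modules — together with exactness of $(-)\ctens_{\Z_p}W$ and of continuous $N$-cohomology on the sequences above. Granting these, the conceptual heart is the containment $K^N\subseteq\rad(\Z_p[G/N])P^N$, which comes from $\nu_N\rad(\Z_pG)$ mapping onto $\rad(\Z_p[G/N])$; the hypothesis that $U^N$ be free over $\Z_p[G/N]$ enters one step earlier, to split the fixed-point sequence and make $K^N$ a direct summand, so that this containment can be upgraded to $K^N=0$. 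This is exactly the mechanism of Weiss's theorem carried over to infinitely generated lattices.
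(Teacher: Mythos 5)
Your argument is correct, but it takes a genuinely different route from the paper's. The paper proves Theorem \ref{infgenfree} by induction on $|N|$: passing to coinvariants $U_K$ for a normal $K$ of index $p$ in $N$ (via Lemma \ref{inv iso coinv}) reduces everything to the case $|N|=p$, where $N$ is automatically central, and that case is handled by reusing the Weiss-style machinery already built for Theorem \ref{infgen} --- the pullback square obtained by applying $-\ctens_{\Z_pN}U$ to the arithmetic pullback of $\Z_pN$ over $\F_p$, Lemma \ref{rootunity} to get freeness of $U/U^N$ over $\Z_p(\zeta)G$, and uniqueness of pullbacks. You instead take a minimal free presentation $0\to K\to P\to U\to 0$ with $K\subseteq\rad(\Z_pG)P$, convert the two hypotheses into two splittings, identify $K^N=\nu_NK$ (this is precisely the paper's Lemma \ref{inv iso coinv}), use centrality of $\nu_N$ to land $K^N$ inside $\rad(\Z_p[G/N])P^N$ as a direct summand, and kill it by Nakayama; this handles arbitrary normal $N$ in one stroke, with no induction, no $\Z_p(\zeta)$, and no pullbacks. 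What the paper's route buys is economy within the paper: Theorem \ref{infgen} concludes that $U$ is a \emph{permutation} module, not a free one, so a projective-cover argument has no purchase there and the pullback method is genuinely needed; given that machinery, Theorem \ref{infgenfree} comes almost for free. What your route buys is independence: it makes Theorem \ref{infgenfree}, hence the main theorem, independent of Theorem \ref{infgen}, Lemma \ref{rootunity} and the $\Z_p(\zeta)$ apparatus altogether. The price is the foundational input you flag --- existence of minimal free presentations (lift a topological basis of $U/\rad(\Z_pG)U$ by a continuous section), the fact that projective profinite modules over the local profinite ring $\Z_pG$ are free, and topological Nakayama for profinite modules --- all of which are standard and of exactly the same flavor as the paper's own argument in Lemma \ref{rootunity} (lift a basis, compare radical quotients, invoke \cite[Lemma 7.4.4]{W}). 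Two streamlinings: the appeal to $H^1(N,K)=0$ is redundant, since the $\Z_pN$-splitting of the presentation already gives exactness of $0\to K^N\to P^N\to U^N\to 0$; and the key containment is immediate from centrality of $\nu_N$, namely $\nu_N\rad(\Z_pG)P=\rad(\Z_pG)\,\nu_NP=\rad(\Z_pG)P^N$, where $\rad(\Z_pG)$ acts on $P^N$ through its image $\rad(\Z_p[G/N])$, both radicals being the augmentation-preimages of $p\Z_p$ exactly as you say.
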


\medskip

The connection to  representation theory cannot be used in a
straightforward way, however.  Indeed, if one factors out the
commutator subgroup of a free open normal subgroup $F$ then the
$G/F$-module one obtains is, in general, not a free module.  In
order to apply Theorem \ref{infgenfree}, we first use
 \pp\ \HNNext s to embed $G$ into a rather special \vfppgrp\ $\tilde G$,
in which, after factoring out the commutator of a free open normal
subgroup $\tilde F$, the obtained $\tilde G/\tilde F$-module is
free. With the aid of this module we prove Theorem \ref{main} for $\tilde G$ and
apply the Kurosh subgroup theorem to deduce the result for $G$.

\medskip
We note also that the second countability condition is essential:
for bigger cardinality the result is not true, a counter example
is available in \cite[Section 4]{HZ}.  We use notation for
profinite and pro-$p$ groups from \cite{RZ12010}.

\section{Modules}

We prove in this section a detection theorem for infinitely generated profinite permutation modules over finite $p$-groups.  Theorem \ref{infgen} is
inspired by Theorem 2 of the article \cite{weiss} by A. Weiss and our proof follows his closely.  The theorems in this section are of independent interest.

\medskip

Let $G$ be a finite group and $R$ a profinite commutative ring.  A profinite permutation $RG$-lattice $U$ is a profinite $RG$-lattice
(that is, a profinite $RG$-module that is free as a profinite $R$-module) with a profinite $R$-basis left invariant under the action of $G$ on $U$.

A profinite permutation  module is an inverse limit of permutation
modules of finite rank (this can be seen by applying \cite[Lemma
5.6.4]{RZ12010} to the profinite $R$-basis of $U$), but this does
not immediately tell us much about the structure of $U$ from a
practical point of view. For instance, there exist profinite
modules for finite rings that do not possess indecomposable
summands.  So before we begin our investigation of permutational
$\Z_pG$-lattices, we provide the general Theorem \ref{limits are
products}, which tells us in particular that a profinite permutation
lattice is simply a product of indecomposable finitely generated
permutation lattices.

We fix some notation that will remain in force for Proposition \ref{AddM equivalent to ProjE} and Theorem \ref{limits are products}.  Let $R$ be a
commutative local noetherian profinite ring, let $G$ be a finite group and let $\{M_1,\dots,M_s\}$ be a finite set of non-isomorphic finitely generated
indecomposable left $RG$-modules.  Set $M=\bigoplus_{i\in\{1,\dots, s\}}M_i$ and let $E = \tn{End}_{RG}(M)$ be the endomorphism ring of $M$.  The category $\tn{add}(M)$ has objects all finite direct sums of direct summands of $M$ and morphisms as in the full module category.  The category $\limproj{} \tn{add}(M)$ has objects all inverse limits of inverse systems in $\tn{add}(M)$.  The full subcategory $\tn{Add}(M)$ of $\limproj{} \tn{add}(M)$ consists of all modules of the form
$$\bigoplus_{i\in \{1,\dots,s\}}\prod_{\kappa_i}M_i$$
where $\kappa_i$ is a cardinal number.  Finally, denote by $\tn{Proj}(E)$ the category of profinite projective \textit{right} $E$-modules.

The following proposition is dual to \cite[Proposition 2.1]{symondsstructure}, but since duality over the given coefficient rings can be rather complicated, we choose to present a direct proof.

\bp{}\label{AddM equivalent to ProjE} The categories $\tn{Add}(M)$ and $\tn{Proj}(E)$ are equivalent. \ep

\begin{proof}
We make frequent use of how \tn{Hom} commutes with limits, products and sums, see \cite[Lemma 5.1.4]{RZ12010} (or more explicitly, \cite[\S 2.3]{symondspermcom1}).  Throughout, the notation $(X,Y)$ will be used as short-hand for $\tn{Hom}_{RG}(X,Y)$.  Define a functor $\Gamma:\tn{Add}(M)\to \tn{Proj}(E)$ by $U\mapsto (M,U)$ in the usual way.  The image $(M,U)$ is indeed projective: if $U = \limproj{i} U_i$ for modules $U_i$ in $\tn{add}(M)$ then $(M,U) = \limproj{i} (M,U_i)$ and each $(M,U_i)$ is easily seen to be projective.  By \cite[Lemma 5.4.1]{RZ12010}, to check the projectivity of $(M,U)$ we only need to complete diagrams of the form
 $$
\xymatrix{
            & (M,U)\ar[d]  \\
X \ar@{->>}[r] & \,Y}
$$
with the modules $X,Y$ finite, in which case the given map from $(M,U)$ factors through a finitely generated projective quotient, giving the map we require.  We need to show that $\Gamma$ is essentially surjective and fully faithful.

\medskip

To see that $\Gamma$ is essentially surjective, observe that an arbitrary projective $E$-module has the form $P = \bigoplus_{i}\prod_{\kappa_i}(M,M_i)$ by \cite[\S 2.5]{symondspermcom1}.  But now
$$\Gamma(\bigoplus_{i}\prod_{\kappa_i}M_i) = (M,\bigoplus_{i}\prod_{\kappa_i}M_i)\cong \bigoplus_{i}\prod_{\kappa_i}(M,M_i) = P,$$
as required.

We demonstrate next that $\Gamma$ is faithful.  We need to show that if $\alpha:\prod U_i\to V$ is continuous and non-zero ($V$ some
other object of $\tn{Add}(M)$), then the corresponding map $\Gamma(\alpha):(M,\prod U_i)\to (M,V)$ is non-zero.  Consider the open subset $Y=V\backslash\{0\}$ of $V$.  Then the inverse image $X$ of $Y$ under $\alpha$ is a non-empty (because $\alpha\neq 0$) open subset of $\prod U_i$.  But if $X$ is open then it contains an element $x$ of the form $(u_i)$ where $u_i$ is 0 for almost all coordinates.  Let the non-zero
coordinates of $x$ be $i_1,\dots,i_n$.  Now the map
$$U_{i_1}\oplus\dots\oplus U_{i_n}\hookrightarrow \prod U_i \xrightarrow{\alpha} V$$
is non-zero since $\alpha(x)\neq 0$.  Since this leftmost module is a direct sum (coproduct), the corresponding universal property gives that there must be some index ($i_1$, say) so that
$$U_{i_1}\hookrightarrow U_{i_1}\oplus\dots\oplus U_{i_n}\hookrightarrow \prod U_i \xrightarrow{\alpha} V$$
is non-zero.  We have a projection $M\twoheadrightarrow U_{i_1}$.  Now
$$\Gamma(\alpha)(M\onto U_{i_1}\to \prod U_i)\neq 0$$
by construction, and hence $\Gamma(\alpha)\neq 0$.

It remains to show that $\Gamma$ is full.  Given $\gamma:(M,U)\to (M,V)$, we need to find a continuous map
$\alpha:U\to V$ such that $\gamma(-) = \alpha\circ-$.  We split the work into two cases.  Firstly, when $R$ finite and secondly, when $R$ is general.  Write $U = \prod_i U_i$ with each $U_i\in \{M_1,\dots,M_s\}$.

Case 1: Let $V$ be an object of $\tn{add}(M)$, hence finite, and fix $\gamma:(M,\prod_i U_i)\to (M,V)$
continuous.  Then $(M,V)$ is finite since $M$ is finitely generated, so the kernel of $\gamma$ is an open subgroup of
$(M,\prod_i U_i) \cong \prod_i (M,U_i)$.  In particular, almost all $(M,U_i)$ map to 0 under $\gamma$.  Denote by $1,\dots,m$ the indices that don't map to 0 under
$\gamma$.  Restricting $\gamma$ to this finite sum, we have a map
$$\gamma':\bigoplus_{1,\dots,m}(M,U_i)\to (M,V).$$
Now, the version of the result for finitely generated modules (see eg. \cite[Proposition 2.1]{symondsstructure}) tells us that there is a unique $\alpha':\bigoplus_{1,\dots,m}U_i\to V$ such that
$\gamma' = \alpha'\circ-$.  Extend $\alpha'$ to a continuous map $\alpha:\prod_i U_i\to V$ by setting it to be $\alpha'$ on $1,\dots,m$ and 0 elsewhere.  Now given $\rho = (\rho_i)\in \prod_i (M,U_i)$ we have
$$\gamma(\rho) = \prod_i\gamma(\rho_i) = \gamma'(\rho_1 +\dots +\rho_m) + 0 = \alpha'\circ(\rho_1 +\dots +\rho_m) + 0 = \alpha\circ\rho,$$
as required.

Next let $V$ be an arbitrary object of $\tn{Add}(M)$, and write it as $V=\limproj{j} \{V_j, \varphi_{jk}\}$, with the $V_j$ objects of $\tn{add}(M)$. Again we consider some continuous homomorphism
$\gamma:(M,U)\to (M,V)$.  We have that $(M,V)\cong \limproj{j} \{(M,V_j), \varphi_{jk}\circ-\}$.  Fix some $\rho:M\to U$.  Then $\varphi_j\gamma(-):(M,U)\to (M,V_j)$, so
by the previous paragraph we have a unique continuous map $\alpha_j:U\to V_j$ such that $\varphi_j\circ\gamma(-) = \alpha_j\circ-$.  Note that for any $\rho:M\to U$ we have
$$\alpha_j\rho = \varphi_j\gamma(\rho) = \varphi_{jk}\varphi_k\gamma(\rho) = \varphi_{jk}\alpha_k\rho,$$
so that $\alpha_j = \varphi_{jk}\alpha_k$.  That is, the $\alpha_j$ provide a map of inverse systems $U\to \{V_j, \varphi_{jk}\}$, and hence a unique continuous map
$\alpha:U\to V$.  But now $\gamma = \alpha\circ-$ because for all $\rho:M\to U$:
$$\gamma(\rho) = (\varphi_j\gamma(\rho)) = (\alpha_j\rho) = (\varphi_j\alpha\rho) = \alpha\rho.$$
This completes case 1.

Case 2:  Let $\pi$ generate the maximal ideal of $R$.  We are given $\gamma:(M,U)\to (M,V)$.  For each $n\in \N$, this induces a map
$\gamma_n:(M/\pi^nM, U/\pi^nU)\to (M/\pi^nM, V/\pi^nV)$.  By Case 1, $\gamma_n  = \alpha_n\circ-$, for some continuous map $\alpha_n:U/\pi^nU\to V/\pi^nV$.

But we also have that $U\cong\limproj{n} U/\pi^nU$ and $V\cong \limproj{n} V/\pi^nV$.  Let $n\leqslant m$ and note that the diagram
$$
\xymatrix{
U/\pi^mU \ar[r]^{\alpha_m}\ar[d] & V/\pi^mV\ar[d]  \\
U/\pi^nU \ar[r]^{\alpha_n} & V/\pi^nV}
$$
commutes.  It follows that the $\alpha_n$ are a map of inverse systems, so yield a unique continuous map $\alpha:U\to V$.  This map has the property that
$\gamma = \alpha\circ-$.
\end{proof}

\begin{theorem}\label{limits are products}
The categories $\tn{Add}(M)$ and $\limproj{}\tn{add}(M)$ are equivalent.
\end{theorem}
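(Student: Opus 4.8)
The plan is to derive this from Proposition \ref{AddM equivalent to ProjE} rather than to rework its proof. Since $\tn{Add}(M)$ is by construction a \emph{full} subcategory of $\limproj{}\tn{add}(M)$, the inclusion functor is automatically fully faithful, so the whole task reduces to showing it is essentially surjective: every object $U=\limproj{j}U_j$ of $\limproj{}\tn{add}(M)$ (with each $U_j$ in $\tn{add}(M)$) must be shown to be isomorphic to an object of $\tn{Add}(M)$.

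One's first instinct is to re-run the argument of Proposition \ref{AddM equivalent to ProjE} with $\Gamma=(M,-)$ directly on the larger category, but I expect this to be the main obstacle: $\Gamma$ is \emph{not} faithful on all of $\limproj{}\tn{add}(M)$, because $M$ is not projective and $\tn{add}(M)$ is not closed under submodules or quotients, so the openness argument proving faithfulness above — which used crucially that $U$ is a \emph{product} — has no analogue for a general inverse limit. To avoid detecting isomorphisms through $\Gamma$ itself, I would instead use its left adjoint.

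Concretely, I would introduce $\Delta=-\ctens_E M$, the left adjoint of $\Gamma=\Hom_{RG}(M,-)$, together with the counit $\epsilon\colon\Delta\Gamma\Rightarrow\id$, whose component at $V$ is the evaluation map $\epsilon_V\colon(M,V)\ctens_E M\to V$. The first step is the easy observation that $\epsilon_M\colon E\ctens_E M\to M$ is an isomorphism and hence, by additivity of $\epsilon$, that $\epsilon_V$ is an isomorphism for every $V$ in $\tn{add}(M)$. I would also record that $\Delta$ carries $\tn{Proj}(E)$ into $\tn{Add}(M)$: a projective $E$-module has the form $\bigoplus_i\prod_{\kappa_i}(M,M_i)$ by \cite[\S 2.5]{symondspermcom1}, and $\Delta$ sends it to $\bigoplus_i\prod_{\kappa_i}M_i$.

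The key step, and the one requiring care, is to show that $\epsilon_U$ is an isomorphism for the general $U=\limproj{j}U_j$. Here I would use that $\Hom$ commutes with inverse limits in its second variable \cite[Lemma 5.1.4]{RZ12010} and that the completed tensor product commutes with inverse limits, giving natural identifications $\Delta\Gamma(U)\cong\limproj{j}\Delta\Gamma(U_j)$ under which, by naturality of $\epsilon$ with respect to the projections $U\to U_j$, the counit $\epsilon_U$ corresponds to $\limproj{j}\epsilon_{U_j}$; since each $\epsilon_{U_j}$ is an isomorphism, so is $\epsilon_U$. The delicate point to verify is precisely this compatibility of the counit with the two inverse limits. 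Granting it, $U\cong\Delta\Gamma(U)=\Delta((M,U))$, and since $(M,U)$ is projective — as was already shown in the proof of Proposition \ref{AddM equivalent to ProjE} — the module $\Delta((M,U))$ lies in $\tn{Add}(M)$, which completes the argument.
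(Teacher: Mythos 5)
Your proof is correct, and its skeleton is the same as the paper's: both reduce the theorem to essential surjectivity of the (automatically fully faithful) inclusion $\tn{Add}(M)\to\limproj{}\tn{add}(M)$, both apply $\Gamma=\Hom_{RG}(M,-)$ to the inverse system defining $U$ and observe that $(M,U)$ is a projective right $E$-module, and both then invoke Proposition \ref{AddM equivalent to ProjE}. Where you differ is at the final step, and the difference is to your credit. The paper declares the conclusion ``immediate'' from the Proposition; but the Proposition by itself only produces some $V\in\tn{Add}(M)$ with $\Gamma(V)\cong\Gamma(U)$, and since its faithfulness and fullness arguments are carried out only when the source is a product (an object of $\tn{Add}(M)$), one cannot directly upgrade the isomorphism $\Gamma(V)\cong\Gamma(U)$ to $V\cong U$. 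Your counit argument supplies exactly the missing mechanism: $\Delta=-\ctens_E M$ carries $\tn{Proj}(E)$ into $\tn{Add}(M)$ (using \cite[\S 2.5]{symondspermcom1} and the fact that the completed tensor product commutes with products), the counit is an isomorphism on $\tn{add}(M)$ by additivity and splitting of idempotents, and the commutation of both $\Hom_{RG}(M,-)$ and $-\ctens_E M$ with inverse limits of profinite modules \cite[\S 5.1, \S 5.5]{RZ12010}, combined with naturality of $\epsilon$ and the universal property of the limit, shows $\epsilon_U$ is an isomorphism, whence $U\cong\Delta((M,U))\in\tn{Add}(M)$. All the tools you use (the tensor--Hom adjunction and the limit-commutation of $\ctens$) are available for profinite modules over the profinite ring $E$, so the ``delicate point'' you flag does go through. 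One quibble: your claim that $\Gamma$ is \emph{not} faithful on $\limproj{}\tn{add}(M)$ is, a posteriori, false --- the theorem itself implies every object there is isomorphic to a product, so $\Gamma$ is in fact fully faithful on the whole category; what is true, and is your real point, is that faithfulness there cannot be established \emph{a priori} by the paper's argument, which uses the product structure of the source. Since that remark is purely motivational, it does not affect the validity of your proof.
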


\begin{proof}
Note that $E=\tn{End}_{RG}(M)$ is profinite because $M$ is
finitely generated.  The inclusion $\tn{Add}(M)\to \limproj{}
\tn{add}(M)$ is clearly fully faithful, so we need only check that
it is essentially surjective.  Let $U = \limproj{I} U_i$ be an
object of $\limproj{} \tn{add}(M)$.  Apply the functor $\Gamma =
\Hom_{RG}(M,-)$ to the inverse system for $U$ to get an inverse
system of projective right $E$-modules. The inverse limit is
$(M,U)$, which we observed in the previous proof is a projective
right $E$-module.  The theorem is now immediate from Proposition
\ref{AddM equivalent to ProjE}.
\end{proof}

\bigskip

\begin{corollary}\label{permprod}
Let $G$ be a finite $p$-group, $R$ a profinite local commutative noetherian ring and let $U$ be a  profinite
permutation $R G$-lattice. Then $U$ can be expressed as a
cartesian product of cyclic permutation modules

$$U=\prod_{H\leqslant G}\prod_{\kappa_H}R[G/H],$$
 where
$\kappa_H$ is a cardinal number.
\end{corollary}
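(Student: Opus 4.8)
The plan is to exhibit $U$ as an object of $\limproj{}\tn{add}(M)$ for a well-chosen $M$ and then invoke Theorem \ref{limits are products}. I would take $M=\bigoplus_{H} R[G/H]$, where $H$ runs over a set of representatives of the conjugacy classes of subgroups of $G$; since $G$ is finite this is a finite direct sum. First I would check that the summands $R[G/H]$ satisfy the hypotheses imposed on the $M_i$ in the setup preceding Proposition \ref{AddM equivalent to ProjE}: they are finitely generated, pairwise non-isomorphic (distinct conjugacy classes of point stabilizers give non-isomorphic transitive permutation lattices), and indecomposable. The indecomposability is the one substantive input: for a finite $p$-group $G$ over a local ring $R$ whose residue field has characteristic $p$, every transitive permutation lattice $R[G/H]$ has local endomorphism ring, hence is indecomposable. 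This is the classical fact that $\tn{Ind}_H^G R$ is indecomposable for $p$-groups, applied after reduction modulo the maximal ideal of $R$.

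With this choice of $M$, the category $\tn{add}(M)$ is precisely the category of permutation $RG$-lattices of finite rank: any finite permutation lattice decomposes, according to the $G$-orbits of its permutation basis, as a direct sum of transitive lattices $R[G/H]$, each of which is a summand of $M$, and conversely every object of $\tn{add}(M)$ is visibly of this form.

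Now, since $U$ is a profinite permutation $RG$-lattice, applying \cite[Lemma 5.6.4]{RZ12010} to its profinite $R$-basis (exactly as recorded in the discussion at the start of this section) writes $U$ as an inverse limit of finite-rank permutation lattices. As the transition maps are arbitrary $RG$-homomorphisms, this is an inverse system in $\tn{add}(M)$, so $U$ is an object of $\limproj{}\tn{add}(M)$. Theorem \ref{limits are products} then provides an isomorphism of $U$ with an object of $\tn{Add}(M)$, which by definition has the shape $\bigoplus_{H}\prod_{\kappa_H}R[G/H]$ for suitable cardinals $\kappa_H$. Because the index set is finite, this direct sum coincides with the cartesian product $\prod_{H}\prod_{\kappa_H}R[G/H]$, giving the claimed description of $U$.

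The only real obstacle is the verification of the indecomposability of the $R[G/H]$, and hence that $M$ genuinely fits the hypotheses of Theorem \ref{limits are products}; once that standard fact and the identification of $\tn{add}(M)$ with the finite permutation lattices are in hand, the corollary is an immediate consequence of Theorem \ref{limits are products}. Conjugate subgroups yield isomorphic summands, so they may be absorbed into a single factor $R[G/H]$ with an appropriate cardinal, which is why it is harmless to index the final product over all subgroups $H\leqslant G$ as in the statement.
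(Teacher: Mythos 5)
Your proposal is correct and follows essentially the same route as the paper: the paper's own proof is just the observation that there are finitely many isomorphism classes of indecomposable permutation modules (indexed by the conjugacy classes of subgroups of $G$) together with an appeal to Theorem \ref{limits are products}. The details you supply --- indecomposability of $R[G/H]$ via local endomorphism rings, the identification of $\tn{add}(M)$ with the finite-rank permutation lattices, and the inverse-limit presentation of $U$ --- are precisely what the paper's one-line proof leaves implicit.
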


\begin{proof}There are finitely many isomorphism classes
of indecomposable permutation modules (indexed by the conjugacy
classes of subgroups of $G$), so the result follows from Theorem
\ref{limits are products}.
\end{proof}

\bigskip
Some notation that will remain in force throughout this section.  Given a finite group $G$ with subgroup $H$, a commutative ring $R$ and a profinite $RH$-module $V$, the profinite $RG$-module $V\ind{G}$ is defined to be $RG\hat\otimes_{RH}V$, where $G$ acts on the left factor as with the usual induced module, and where ``$\hat\otimes$'' denotes the completed tensor product \cite[\S 5.5]{RZ12010}.  The restriction of the $RG$-module $U$ to $RH$ is denoted $U\res{H}$.  When $N$ is a normal subgroup of $G$ and $U$ is an $RG$-module, the $R[G/N]$-modules $U^N, U_N$ denote, respectively, the submodule of $N$-invariants of $U$, and the module of $N$-coinvariants $U/I_NU$ of $U$, where $I_N$ is the kernel of the augmentation map $\varepsilon:RN\to R$.

\bl{gp-rg}\label{inv iso coinv} Let $N$ be a normal subgroup of the finite group $G$, $R$ a profinite integral domain and $U$ a profinite
$RG$-module that is free as an $RN$-module.  The map $\phi:U\to U^N$ defined by
$\phi(u):=\sum_{n\in N}nu$ is an \epi\ of $RG$-modules with kernel $I_NU$.  Thus $U^N\cong U_N$. \el

\begin{proof} The element $\sum_{n\in N}n$ belongs to
the centre of $RG$, hence $\phi$ is a well-defined $RG$-module homomorphism.  The kernel of $\phi$ clearly contains $I_NU$.  We can check the other assertions after first restricting to $N$.  Write $U\res{N}$ as a product of modules of the form $RN$, indexed by a set $I$ .  For a subset $X$ of $I$ let $U_X$ be the subproduct of $U$ indexed by $X$ and when $Y\subseteq X$ are finite subsets of $I$, let $\rho_{XY}: U_Y\to U_X$ be the obvious projection.  Restricting the inverse system $\{U_X, \rho_{XY}\}$ to $U^N$ yields $U^N = \limproj{X}\{(U_X)^N,\rho_{XY}\}$.  For each finite $X$ the obvious map $\phi_X:(U_X)_N\to (U_X)^N$ is the component at $X$ of an isomorphism of inverse systems \cite[Lemma 4]{bulletin}, and hence $\phi$ is an isomorphism.\end{proof}

\medskip

Let $\zeta$ be a primitive $p$th root of unity and $\Z_p(\zeta)$ the corresponding totally ramified extension of $\Z_p$ of degree $p-1$.  Let $\pi$ be a generator of the unique maximal ideal of $\Z_p(\zeta)$.  The following may be viewed as a special case of a generalization of \cite[Theorem 3]{weiss} to infinitely generated modules.

\begin{lemma}\label{rootunity} Let $G$ be a finite $p$-group and $V$ a
$\Z_p(\zeta)G$-lattice. Suppose that $V/\pi V$ is a free $\F_pG$-module. Then $V$ is a free $\Z_p(\zeta)G$-module.\end{lemma}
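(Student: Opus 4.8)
The plan is to lift a free basis of $V/\pi V$ to $V$ and then upgrade the resulting comparison map to an isomorphism by a completeness argument, replacing Nakayama's lemma (which is unavailable for infinitely generated modules) by an inverse limit of isomorphisms. Write $\Lambda=\Z_p(\zeta)G$ and note that, since $\Z_p(\zeta)$ is totally ramified over $\Z_p$, we have $\Z_p(\zeta)/\pi\Z_p(\zeta)=\F_p$ and hence $\Lambda/\pi\Lambda=\F_pG$. By hypothesis $V/\pi V$ is a free $\F_pG$-module; since a free module is a permutation lattice with trivial point stabilizers, Corollary \ref{permprod} gives a cardinal $\kappa$ with $V/\pi V\cong\prod_{\kappa}\F_pG$. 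Set $P=\prod_{\kappa}\Lambda$, a free profinite $\Z_p(\zeta)G$-lattice. Because multiplication by $\pi$ is surjective on $\Lambda$ we obtain $P/\pi P\cong\prod_{\kappa}(\Lambda/\pi\Lambda)=\prod_{\kappa}\F_pG\cong V/\pi V$.

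First I would produce the comparison map. As a free profinite module, $P$ is projective, so the isomorphism $P/\pi P\xrightarrow{\sim}V/\pi V$ composed with $P\onto P/\pi P$ lifts through the surjection $V\onto V/\pi V$ to a continuous $\Lambda$-homomorphism $f\colon P\to V$ whose reduction $\bar f_1\colon P/\pi P\to V/\pi V$ is the chosen isomorphism. I then claim that every reduction $\bar f_n\colon P/\pi^nP\to V/\pi^nV$ is an isomorphism, by induction on $n$. Since $P$ and $V$ are $\Z_p(\zeta)$-free, multiplication by $\pi^n$ induces isomorphisms $P/\pi P\xrightarrow{\sim}\pi^nP/\pi^{n+1}P$ and $V/\pi V\xrightarrow{\sim}\pi^nV/\pi^{n+1}V$, and the $\Lambda$-linearity of $f$ shows that, under these identifications, the map induced by $\bar f_{n+1}$ on the subquotients $\pi^n(-)/\pi^{n+1}(-)$ is exactly $\bar f_1$. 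Comparing the short exact sequences $0\to\pi^n(-)/\pi^{n+1}(-)\to (-)/\pi^{n+1}(-)\to (-)/\pi^n(-)\to 0$ for $P$ and for $V$ and applying the five lemma, the isomorphy of $\bar f_1$ and (inductively) of $\bar f_n$ forces $\bar f_{n+1}$ to be an isomorphism.

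Finally I would pass to the limit. Both $P$ and $V$ are recovered from their $\pi$-power quotients, $P\cong\invlim_n P/\pi^nP$ and $V\cong\invlim_n V/\pi^nV$ (for a product of copies of the complete discrete valuation ring $\Z_p(\zeta)$ the profinite topology has subbasic open sets obtained by fixing a single coordinate modulo a power of $\pi$, which is exactly the inverse limit topology), and $f$ is compatible with these presentations. Hence $f=\invlim_n\bar f_n$ is a levelwise isomorphism of inverse systems, so it induces an isomorphism on limits, and $V\cong P$ is a free $\Z_p(\zeta)G$-module. The one genuinely delicate point, and the reason the finitely generated argument of \cite{weiss} does not transfer verbatim, is precisely this last passage from ``$\bar f_n$ is an isomorphism for every $n$'' to ``$f$ is an isomorphism'': for infinitely generated modules one cannot invoke Nakayama's lemma, since the kernel and cokernel of $f$ need not be finitely generated, so the completeness $V\cong\invlim_n V/\pi^nV$ must do the work that Nakayama does in the finite-rank case.
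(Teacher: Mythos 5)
Your proof is correct, but it executes both halves of the argument differently from the paper, and the comparison is instructive. The shared skeleton is: produce a map from an explicitly free $\Z_p(\zeta)G$-module to $V$ that reduces to an isomorphism modulo $\pi$, then promote it to an isomorphism. For the first half, the paper never passes through a product decomposition: it chooses a continuous section $\delta:V/\pi V\to V$ of the quotient map (\cite[Proposition 2.2.2]{RZ12010}), takes the free module $A$ on $\delta(\Omega)$, where $\Omega$ is a profinite basis of $V/\pi V$, and gets $f:A\to V$ from the universal property; you instead realize $V/\pi V$ as $\prod_{\kappa}\F_pG$ and lift through projectivity of $\prod_{\kappa}\Z_p(\zeta)G$. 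Your route works, but the citation needs a repair: Corollary \ref{permprod} only yields some decomposition $\prod_{H\leqslant G}\prod_{\kappa_H}\F_p[G/H]$ and does not by itself exclude factors with $H\neq 1$, and Lemma \ref{indexing sets coincide}, which would give uniqueness, assumes second countability, which is not a hypothesis of Lemma \ref{rootunity}. The clean reference is Theorem \ref{limits are products} with $M=\F_pG$: a free profinite $\F_pG$-module is an inverse limit of finite-rank free modules, and $\F_pG$ is the only indecomposable involved (it is a local ring since $G$ is a $p$-group), so the module lies in $\tn{Add}(\F_pG)$, whose objects are exactly the products $\prod_\kappa \F_pG$. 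For the second half, the paper simply invokes \cite[Lemma 7.4.4]{W}, a topological Nakayama lemma valid for arbitrary, not necessarily finitely generated, profinite modules over a local profinite ring: compactness is precisely what makes Nakayama survive infinite generation. So your closing assertion that Nakayama ``is unavailable'' in the infinitely generated setting misreads the profinite category -- its availability is the whole point of the paper's citation -- but your hand-made substitute (the multiplication-by-$\pi^n$ identification of graded pieces, the short five lemma by induction, and passage to $\invlim_n$, using $\bigcap_n\pi^nV=0$ plus compactness to identify $V$ with $\invlim_n V/\pi^nV$) is a valid, self-contained replacement; note only that your coordinate justification of completeness tacitly uses that a lattice is a product of copies of $\Z_p(\zeta)$ (again Theorem \ref{limits are products}), though the simpler remark that every profinite $\Z_p(\zeta)$-module is an inverse limit of finite quotients, each killed by a power of $\pi$, suffices. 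The trade-off: the paper's proof is a few lines resting on \cite{W}; yours is longer but independent of \cite{W} and makes explicit why finite generation plays no role.
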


\begin{proof} Let $\phi:V\longrightarrow V/\pi V$ be the natural epimorphism.
By \cite[Proposition 2.2.2]{RZ12010},
$\phi$ admits a continuous section $\delta: V/\pi V \longrightarrow V$ with $\delta(\pi V)=0$. Consider a profinite
space $\Omega$ of free generators of $V/\pi V$ converging
to $0$. Put $\mathcal{X}=\delta\left( \Omega \right)$.  Let $A$ be a free pro-$p$ $\Z_p(\zeta)G$-module on
$\mathcal{X}$ and let $f:A\longrightarrow V$ be the
$\Z_p(\zeta)G$-homomorphism induced by sending $\mathcal{X}$
identically to its copy in $V$. Then as a pro-$p$
$\Z_p(\zeta)$-module, $A$ is free pro-$p$ on the basis
$G\mathcal{X}$. Since $V/\pi V$ is a free $\mathbb{F}_p G$-module
on $\Omega$, it is a free $\F_p$-module on $G\Omega$.

Notice that as  $\Z_p(\zeta)$-modules, the radicals of $A, V$ respectively are $\pi A, \pi V$, and that the map $\bar f: A/\pi A\to V/\pi V$ is an isomorphism, so that $f$ is an isomorphism by \cite[Lemma 7.4.4]{W}, as required.
\end{proof}

\medskip

As in \cite{weiss}, a finitely generated generalized permutation $\Z_p(\zeta)G$-lattice is a finite direct sum of modules of the form $\varphi\ind{G}$, where ``$\varphi$'' represents a rank one $\Z_p(\zeta)H$-lattice, $H$ some subgroup of $G$, with action from $H$ coming only from a group homomorphism $\varphi:H\to \langle\zeta\rangle$.  A profinite generalized permutation $\Z_p(\zeta)G$-lattice is an inverse limit of finite rank permutation $\Z_p(\zeta)G$-lattices.  Since $G$ is finite, there are only finitely many isomorphism types of indecomposable finite rank generalized permutation $\Z_p(\zeta)G$-lattice.  Thus, by Theorem \ref{limits are products}, profinite generalized permutation $\Z_p(\zeta)G$-lattices are simply products of indecomposable finite rank generalized permutation $\Z_p(\zeta)G$-lattices.

\medskip

We give one more technical lemma before proving the main result of this section.

\begin{lemma}\label{indexing sets coincide}
Let $W$ be a second countable profinite $\F_pG$-module that can be
expressed as a product of finitely generated indecomposable
submodules. Let $\prod_{s\in S}\prod_{\kappa_s}W_s$ and
$\prod_{s\in S}\prod_{\nu_s}W_s$ be two continuous decompositions
of $W$, where the $W_s$ are pairwise distinct finitely generated
indecomposable $\F_pG$-modules and $\kappa_s, \nu_s$ are
cardinals.  Then for each $s\in S$ we have $\kappa_s=\nu_s$.
\end{lemma}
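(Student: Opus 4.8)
The plan is to isolate, for each fixed $t\in S$, a numerical invariant of $W$ that counts exactly the multiplicity of $W_t$, and then to compute that invariant from each of the two decompositions. Since $W_t$ is a finite indecomposable $\F_p G$-module, its endomorphism ring $E_t=\End_{\F_p G}(W_t)$ is a finite local ring (Fitting); write $\Delta_t=E_t/\rad(E_t)$ for its residue field, a finite field with $|\Delta_t|\geqslant p$. For a profinite $\F_p G$-module $X$ I would consider the $\Delta_t$-vector space
$$T_t(X):=\Hom_{\F_p G}(W_t,X)/R_t(X),$$
where $R_t(X)$ is the set of continuous $f:W_t\to X$ such that $gf\in\rad(E_t)$ for every continuous $g:X\to W_t$. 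This construction is functorial in $X$ under isomorphisms, so each of the two given topological isomorphisms $W\cong\prod_s\prod_{\kappa_s}W_s$ and $W\cong\prod_s\prod_{\nu_s}W_s$ induces an isomorphism of the corresponding invariants; since $T_t(W)$ is one single object, the whole problem reduces to evaluating $T_t$ on a product of the form $\prod_s\prod_{\kappa_s}W_s$.

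The first substantive step is to show $T_t\big(\prod_s\prod_{\kappa_s}W_s\big)\cong\Delta_t^{\kappa_t}$. Because $W_t$ is compact, $\Hom_{\F_p G}(W_t,-)$ commutes with the topological product, so $\Hom_{\F_p G}(W_t,\prod_s\prod_{\kappa_s}W_s)\cong\prod_s\prod_{\kappa_s}\Hom_{\F_p G}(W_t,W_s)$. For $s\neq t$ every map $W_t\to W_s$ lies in $R_t$: if some composite $gf$ with $g:W_s\to W_t$ were a unit of $E_t$, then $W_t$ would be a direct summand of $W_s$, which is impossible for non-isomorphic indecomposables; and on the diagonal $R_t(W_t)=\rad(E_t)$ because $E_t$ is local. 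The key point — the step I expect to be the main obstacle — is that $R_t$ also respects the infinite product, that is $R_t\big(\prod_s\prod_{\kappa_s}W_s\big)=\prod_s\prod_{\kappa_s}R_t(W_s)$. This should follow because any continuous $g$ from the product to the finite module $W_t$ has open kernel, hence factors through the projection onto finitely many coordinates and is a finite sum of coordinatewise maps; testing the defining condition of $R_t$ one coordinate at a time then reduces to the single-factor computations just made. Granting this, the factors with $s\neq t$ die in the quotient and $T_t\big(\prod_s\prod_{\kappa_s}W_s\big)\cong(E_t/\rad(E_t))^{\kappa_t}=\Delta_t^{\kappa_t}$. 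Running the same computation on the second decomposition gives $T_t(W)\cong\Delta_t^{\nu_t}$ as well.

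Finally I would deduce $\kappa_t=\nu_t$ from $\Delta_t^{\kappa_t}\cong\Delta_t^{\nu_t}$. Here second countability enters: the projection of $W$ onto the sub-product $\prod_{\kappa_t}W_t$ is a continuous surjection, and a continuous image of the compact metrizable space $W$ is again compact metrizable; since $W_t$ is finite and nontrivial this forces $\kappa_t\leqslant\aleph_0$, and likewise $\nu_t\leqslant\aleph_0$. As $\Delta_t$ is a fixed finite field with at least $p$ elements, $|\Delta_t^{\kappa_t}|$ is finite exactly when $\kappa_t$ is finite, in which case it equals $|\Delta_t|^{\kappa_t}$ and determines $\kappa_t$, while $\kappa_t=\aleph_0$ makes it infinite. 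Since $T_t(W)$ is the same object computed two ways, its cardinality equals both $|\Delta_t|^{\kappa_t}$ and $|\Delta_t|^{\nu_t}$; comparing, and using $\kappa_t,\nu_t\leqslant\aleph_0$, we conclude $\kappa_t=\nu_t$. As $t\in S$ was arbitrary, this establishes the lemma.
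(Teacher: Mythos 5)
Your proof is correct, and it takes a genuinely different route from the paper's. The paper argues by contradiction via Krull--Schmidt for finitely generated $\F_pG$-modules: if $\kappa_s=n<\nu_s$, the second decomposition gives a split continuous projection of $W$ onto $n+1$ copies of $W_s$, whose kernel $X$ is open; hence $X$ contains the subproduct $Y$ of the first decomposition indexed by a cofinite set of coordinates, so the finitely generated module $W/Y$, which has at most $n$ direct summands isomorphic to $W_s$, has $W/X\cong W_s^{n+1}$ as a direct summand --- contradicting Krull--Schmidt; the infinite case is handled exactly as you handle it, by second countability. You instead construct a functorial multiplicity invariant $T_t(-)=\Hom_{\F_pG}(W_t,-)/R_t(-)$, i.e.\ Hom modulo the radical of the local endomorphism ring $E_t$, prove that it converts topological products into products --- the crucial point, which you justify correctly, being that a continuous homomorphism from a profinite product to the finite module $W_t$ has open kernel and so factors through finitely many coordinates, whence membership in $R_t$ can be tested coordinatewise --- and then read off each multiplicity from the cardinality of $T_t(W)$. (The paper's ``open kernel contains a cofinite subproduct'' step is the same finiteness phenomenon as your factoring lemma, in different clothing.) As for what each approach buys: the paper's argument is shorter and quotes only finite Krull--Schmidt, with no explicit appeal to radicals or local rings; yours is a direct computation rather than a proof by contradiction, exhibits the multiplicities as honest invariants of $W$, would in fact show that the set of isomorphism types occurring is itself determined by $W$ (so one need not assume a priori that both decompositions involve the same list $W_s$), and makes transparent that second countability is needed only to identify two infinite multiplicities as $\aleph_0$ --- a limitation shared by the paper's proof.
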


\begin{proof}
Fix $s$, suppose that $\kappa_s=n$ for some finite number $n$ and suppose for contradiction that $\nu_s$ is strictly greater than $n$.  Then, by considering the second decomposition, we obtain a continuous and split projection onto a product of $n+1$ copies of $W_s$.  Let $X$ be the kernel of this projection.  Since $X$ is open in $W$, we can find some cofinite subset $I$ of the indexing set for the first decomposition such that the product $Y$ indexed by $I$ is contained in $X$.  We obtain a continuous surjection $W/Y\to W/X$, which splits since the map $W\to W/X$ splits.  Thus $W/X$ is a direct summand of $W/Y$.  But the former is a product of $n+1$ copies of $W_s$, while the latter has at most $n$ direct summands isomorphic to $W_s$, contradicting the Krull-Schmidt theorem for finitely generated $\F_pG$-modules.

Meanwhile, if $W_s$ appears infinitely many times in both compositions then $\kappa_s = \nu_s = \aleph_0$ since $W$ is second countable.
\end{proof}

\begin{theorem}\label{infgen}
Let $G$ be a finite $p$-group and let $U$ be a second countable profinite $\Z_p
G$-lattice.  Suppose that there is a central subgroup $N$ of $G$
of order $p$ such that
 \begin{itemize}
  \item The restriction $U\res{N}$ is a free $\Z_p N$-module,
  \item $U^N$ is a permutation $\Z_p[G/N]$-module
    \item $U/U^N$ is a generalized permutation $\Z_p(\zeta)G$-lattice.
 \end{itemize}
Then $U$ itself is a permutation $\Z_pG$-lattice.
\end{theorem}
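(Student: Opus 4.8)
The plan is to reduce everything to the single central generator of $N$, to read a candidate permutation lattice $P$ directly off the generalized permutation structure of $U/U^N$, and then to construct a comparison map $f\colon P\to U$ that I show is an isomorphism. Write $N=\langle\sigma\rangle$, put $\omega=\sigma-1$ and $\nu=1+\sigma+\cdots+\sigma^{p-1}$ in $\Z_pG$; both are central since $\sigma$ is. Because $U\res{N}$ is free over $\Z_pN=\Z_p[\sigma]/(\sigma^p-1)$, multiplication by $\omega$ and $\nu$ on $U$ satisfy $\ker(\omega)=\nu U=U^N$ and $\ker(\nu)=\omega U$, so $U^N=\nu U$, $U/U^N=U/\nu U$, and $\omega$ induces a $\Z_pG$-isomorphism $U/U^N\xrightarrow{\sim}\omega U$. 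By Lemma \ref{inv iso coinv} the norm gives a natural identification $U_N\cong U^N$. Thus $U$ sits in the two short exact sequences $0\to U^N\to U\to U/U^N\to 0$ and $0\to\omega U\to U\to U_N\to 0$, whose outer terms are a permutation $\Z_p[G/N]$-lattice and (copies of) the generalized permutation lattice $U/U^N$.

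First I would identify the candidate. Since generalized permutation lattices are products of indecomposables (Theorem \ref{limits are products}), write $U/U^N=\prod_j\varphi_j\ind{G}$ with $\varphi_j\colon K_j\to\langle\zeta\rangle$. Restricting to $N$ and using that $U\res{N}$ free forces $(U/U^N)\res{N}$ to be $\Z_p(\zeta)$-free with $\sigma$ acting as the scalar $\zeta$; this forces $N\leqslant K_j$ and $\varphi_j|_N$ faithful for every $j$, so $H_j:=\ker\varphi_j$ is a complement to $N$ in $K_j$ (i.e. $N\cap H_j=1$, $NH_j=K_j$). Set $P=\prod_j\Z_p[G/H_j]$. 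A direct computation of $\nu$-coinvariants gives $\Z_p[G/H_j]/\nu\Z_p[G/H_j]\cong\varphi_j\ind{G}$ and $(\Z_p[G/H_j])^N\cong\Z_p[G/K_j]$, so $P\res{N}$ is free, $P/P^N\cong U/U^N$ holds by construction, and $P^N$ is the permutation $\Z_p[G/N]$-lattice $\prod_j\Z_p[G/K_j]$.

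Next I would build $f\colon P\to U$. A $\Z_pG$-map out of the summand $\Z_p[G/H_j]=\Z_p\ind{G}_{H_j}$ is the same as an $H_j$-fixed element of $U$, so it suffices to lift the generator $v_j\in(U/U^N)^{H_j}$ of the $j$-th summand to some $x_j\in U^{H_j}$. The obstruction lies in $H^1(H_j,U^N)$; but $U^N$ is a permutation $\Z_p[G/N]$-lattice, so $U^N\res{H_j}$ is a product of modules $\Z_p[H_j/L]$, and by Shapiro's lemma $H^1(H_j,U^N)$ is a product of groups $H^1(L,\Z_p)=\Hom(L,\Z_p)=0$; hence the lifts exist. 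Taking the $v_j$ converging to $0$ and lifting accordingly, the $x_j$ assemble into a continuous $\Z_pG$-homomorphism $f$ inducing the isomorphism $P/P^N\xrightarrow{\sim}U/U^N$. Since a morphism of $\Z_pG$-lattices inducing an isomorphism mod $p$ is an isomorphism (as in the proof of Lemma \ref{rootunity}), and reduction mod $p$ keeps the sequence $0\to U^N\to U\to U/U^N\to 0$ exact, the five lemma reduces the whole theorem to showing that $f^N\colon P^N\to U^N$ is an isomorphism modulo $p$.

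I expect this last step to be the main obstacle, since it is exactly the assertion that the permutation data of $U^N$ is precisely what is forced by the generalized permutation data of $U/U^N$ together with the freeness of $U\res{N}$. To settle it I would transport $f^N$ through the identification $U^N\cong U_N$ of Lemma \ref{inv iso coinv}, turning it into the induced map $P_N\to U_N$ on coinvariants; as $\F_p[G/N]$ is local, surjectivity of its reduction can be tested on $G/N$-coinvariants, where it follows from surjectivity onto $U/U^N$ and the computation $(U/U^N)_N\cong (U/U^N)/\pi(U/U^N)$, a genuine permutation $\F_p[G/N]$-module. Finally the uniqueness of multiplicities from Lemma \ref{indexing sets coincide}, applied to the second countable reductions, should force the reduced map to carry a permutation basis to a permutation basis and hence to be injective as well. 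Concluding that $f$ is an isomorphism then exhibits $U\cong P$ as a permutation $\Z_pG$-lattice.
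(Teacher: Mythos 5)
You take a genuinely different route from the paper. The paper (following Weiss) realizes $U$ as the pullback of $U_N\to\overline{U_N}\leftarrow U/U^N$, decomposes $U_N\cong U^N$ and $U/U^N$ into products of indecomposables, matches the two index sets through the common reduction $\overline{U_N}$ by means of Lemma \ref{indexing sets coincide} (this is where second countability enters), and then corrects the gluing data by lifting an $\F_pG$-automorphism to a $\Z_pG$-automorphism via \cite[Lemma 3.23]{symondspermcom1}. You instead read a candidate $P=\prod_j\Z_p[G/H_j]$ off the structure of $U/U^N$ alone and build a direct comparison map $f\colon P\to U$ by lifting generators. Your identification of the $K_j$ and $H_j$, the computations of $P^N$ and $P/P^N$, the vanishing $H^1(H_j,U^N)=0$ via Mackey and Shapiro (which is precisely where the hypothesis that $U^N$ is a permutation module enters), and the five-lemma reduction to ``$f^N$ is an isomorphism mod $p$'' are all correct. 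One point you gloss over: for $f$ to exist as a continuous map on the \emph{product} you must choose the lifts $x_j$ so that $x_j\to 0$, which is not automatic; it can be arranged by grouping the indices according to the finitely many subgroups $H\leqslant G$ occurring as some $H_j$ and lifting through a continuous section of $U^{H}\onto (U/U^N)^{H}$ that sends $0$ to $0$.

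The genuine gap is the final injectivity step, exactly where you anticipated trouble. Lemma \ref{indexing sets coincide} can only tell you that $P^N/pP^N$ and $U^N/pU^N$ have equal multiplicities of indecomposable summands, i.e.\ that they are \emph{abstractly} isomorphic; it says nothing about the particular map $f^N\bmod p$. In the infinitely generated profinite setting, ``surjective plus abstractly isomorphic'' does not imply injective: the shift on $\prod_{\aleph_0}W$ is a surjective, non-injective endomorphism, and nothing in your argument forces the image of a permutation basis to be a permutation basis. Fortunately, the step you flagged as the main obstacle has a short direct proof that rescues your plan. For any lattice $A$ with $A\res{N}$ free one has $A^N=\nu A$ and $\ker(\nu\colon A\to A)=\omega A=I_NA$, and the assignment $\nu b+pA^N\mapsto b+(\nu A+\omega A)$ is a well-defined isomorphism
$$A^N/pA^N\xrightarrow{\ \sim\ } A/(\nu A+\omega A)=(A/A^N)/\pi(A/A^N),$$
natural in $A$ (well-definedness and injectivity use $p-\nu\in I_N$, $\nu\omega=0$ and $\nu^2=p\nu$). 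Applying naturality to $f$ identifies $f^N\bmod p$ with $\bar f\bmod \pi$, which is an isomorphism by your construction of $f$; your five lemma and the Nakayama-type argument of Lemma \ref{rootunity} then finish the proof. Note that this completion uses neither Lemma \ref{indexing sets coincide} nor second countability — paralleling the remark made in the paper's proof of Theorem \ref{infgenfree} — so your approach, once repaired, proves the theorem without the second countability hypothesis, which the paper's own argument does not.
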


\begin{proof}
Let $N = \langle n\rangle$.  Consider the ideals $I_N  = \ker(\varepsilon:\Z_pN\to \Z_p) = \langle n-1, n^2-1,\cdots, n^{p-1}-1\rangle$ and $\Z_pN^N = (1+n+\cdots+n^{p-1})\Z_p$ of $\Z_pN$.  As in \cite{weiss}, we have a pullback diagram
 $$
\xymatrix{
\Z_pN \ar[r]\ar[d] & \Z_pN/\Z_pN^N\ar[d]  \\
\Z_pN/I \ar[r] & \Z_pN/(I+\Z_pN^N).}
$$
Fix an isomorphism $\psi:N\to \langle\zeta\rangle$.  Via $\psi$ we have
an isomorphism $Z_pN/Z_pN^N\cong \Z_p(\zeta)$, and the above pullback can be expressed as
 $$
\xymatrix{
\Z_pN \ar[r]\ar[d] & \Z_p(\zeta)\ar[d]  \\
\Z_p \ar[r] & \,\F_p.}
$$

Apply the functor $-\ctens_{\Z_pN} U$ to the above diagram.  Since
$U\res{N}$ is free, we can write it as an inverse limit of finite
rank free $\Z_pN$-modules.  Since inverse limits are functorial
and commute naturally with completed tensor products, the argument
in \cite{weiss} applied to the finite rank quotients shows that
our diagram is an inverse limit of pullbacks, and is hence itself
the pullback
$$
\xymatrix{
U \ar[r]\ar[d] & U/U^N\ar[d]  \\
U_N \ar[r] & \overline{U_N} = \overline{U/U^N}.}
$$

By  \preflemma{gp-rg} we have that $U_N\cong U^N$, so that $U_N$
is a permutation module by assumption, and hence so too is
$\overline{U_N}$.

Thus we have a permutation module $U_N$ and a generalized
permutation module $U/U^N$, which by Theorem \ref{limits are products} are products of indecomposable modules of the same type.  The indexing sets of the
indecomposable summands of these modules both biject naturally
onto corresponding indexing sets of indecomposable summands of
$\overline{U_N}$, and hence by Lemma \ref{indexing sets coincide} their indexing sets coincide.  Now as in \cite{weiss}, we can write
$$U_N \cong \prod_r \Z_p\ind{G}_{G_r},\,\, U/U^N \cong \prod_r\varphi_r\ind{G}_{G_r}$$
where $\varphi_r:G_r\to \langle \zeta\rangle$ is a group
homomorphism whose restriction to $N$ is $\psi$.  Let
$H_r=\ker(\varphi_r)$.  Again following \cite{weiss} we obtain
that
 $$
\xymatrix{
\Z_p\ind{G}_{H_r} \ar[r]\ar[d] & \varphi_r\ind{G}_{G_r}\ar[d]  \\
\Z_p\ind{G}_{G_r} \ar[r] & \F_p\ind{G}_{G_r}}
$$
is a pullback.  The product of these pullbacks as $r$ varies is
itself a pullback diagram, whose upper left component is the
permutation module $L = \prod_r \Z_p\ind{G}_{H_r}$.  We want to
show that $L\cong U$.  The discussion above gives isomorphisms
$h:L_N\to U_N$ and $f:L/L^N\to U/U^N$ (of $\Z_pG$ and
$\Z_p(\zeta)G$-modules respectively).  Let $h',f'$ denote the
isomorphisms $\overline{L_N}\to \overline{U_N}$ induced by $h,f$
respectively, so that $\overline{h}^{-1}\overline{f}$ is an
$\F_pG$-module automorphism of $\overline{L_N}$.  By
\cite[Lemma 3.23]{symondspermcom1} we can lift this automorphism to a
$\Z_pG$-module endomorphism $k$ of $L_N$.  Since $L_N$ is a free
$\Z_p$-module, \cite[7.4.4, 2.5.2]{W} tells us that $k$ is an
automorphism.   Replace the isomorphism $h$ by the isomorphism
$hk$ and note that $\overline{hk}=\overline{f}$.  Now a diagram
chase gives the required isomorphism $L\to U$, completing the
theorem.
\end{proof}

\medskip

\begin{proof}[of Theorem \ref{infgenfree}]
Suppose first that $N$ has order $p$.  We maintain the notation from the
proof of Theorem \ref{infgen}.  The second hypothesis implies that
$\overline{U^N}$ is a free $\F_p G$-module, and now Lemma
\ref{rootunity} tells us that $U/U^N$ is a free
$\Z_p(\zeta)G$-lattice.  The argument in Theorem \ref{infgen} thus
shows that $U$ is a pullback of free modules (the second countable
hypothesis is not required because we do not need to use Lemma
\ref{indexing sets coincide}).  The result now follows immediately
from the uniqueness of pullbacks.

We now proceed by induction on the order of $N$.  Let $K$ be a normal subgroup of $G$ contained as a subgroup of index $p$ in $N$.  The module $U_K$ has the properties that $U_K\res{N/K}$ and $(U_K)^{N/K}\cong U^N$ (see Lemma \ref{inv iso coinv}) are free $\Z_p[G/K]$-modules, so that $U_K$ ($\cong U^K$) is free by the first paragraph.  Since the order of $K$ is less than the order of $N$, by induction we see that $U$ is free.
\end{proof}

\section{HNN-embedding}

For the convenience of the reader we recall some concepts and
terminology from \cite{HZ07}, adapting the definitions to the category of
pro-$p$ groups.

\bd{boolean sp} A {\em boolean} or {\em profinite} space is, by
definition, an inverse limit of finite discrete spaces, i.e. a
compact, Hausdorff, totally disconnected topological space.
Morphisms in the category of boolean spaces are continuous maps.
\ed

A profinite space $X$ with a profinite group $G$ acting
continuously on it will be called a $G$-{\em space}.

\bd{TG}\label{finite subgroups} An inverse system of finite groups with projective limit $G$ induces an inverse system of the sets of subgroups of the groups in the inverse system, whose limit is the space of closed subgroups of $G$.  If $G$ is virtually torsion free, the subspace $\Fin G$ of non-trivial finite subgroups of $G$ is closed, hence inherits a natural profinite topology (the {\em subgroup topology}).  Equipped with this topology, $\Fin G$ with $G$ acting by conjugation becomes a $G$-space.\ed

We recall the notion of a pro-$p$ \HNNgrp\ as defined in \pcite{HZ07} for the pro-$\cC$ case.

\bd{sheaf} A {\em sheaf} of pro-$p$ groups (over a profinite space $X$)
is a triple $(\cG,\gamma,X)$, where $\cG$ and $X$ are profinite
spaces, and $\gamma$ is a continuous map from $\cG$ onto $X$,
satisfying the following two conditions:
\begin{rmenumerate}
\item for every $x\in X$, the fiber $\cG(x)=\gamma^{-1}(x)$ over
      $x$ is a pro-$p$ group;
\item if $\cG^2$ denotes the subspace of $\cG\times\cG$ consisting
of
      pairs $(g,h)$ such that $\gamma(g)=\gamma(h)$, then the map
      $\mu_{\cG}:\cG^2\longrightarrow \cG$, defined by $\mu_{\cG}(g,h):=
      g^{-1}h \in \cG(\gamma(g))=\cG(\gamma(h))\subseteq\cG$,
      is continuous.
\end{rmenumerate}

If there is no danger of confusion we shall write $(\cG,X)$
instead of $(\cG,\gamma,X)$.

A {\em morphism} of sheaves of pro-$p$ groups
$(\alpha,\bar\alpha):(\cG,\gamma,X) \rightarrow ({\cal H},\eta,Y)$ is a pair  of continuous
maps $\alpha: \cG \rightarrow {\cal H}$,
$\bar\alpha:X\rightarrow Y$ such that the diagram

$$\xymatrix{
\cG\ar@{->}^\alpha[rr]\ar@{->}^\gamma[d] & &{\cal H}\ar@{->}^\eta[d]\\
X\ar@{->}^{\bar\alpha}[rr] & &Y }$$ is commutative and, for all
$x\in X$, the restriction $\alpha_x:=\alpha_{|\cG(x)}$ of $\alpha$
to the fiber $\cG(x)$ is a homomorphism from $\cG(x)$ to
${\cal H}(\bar\alpha(x))$.

In the special case when $Y=\{y\}$ consists of a single element
set, we obtain with $H:={\cal H}(y)$ the definition of a {\em fiber
morphism} $\alpha:\cG\longrightarrow H$, of the sheaf $\cG$ of
\pCgrp s to the \pCgrp\ $H$. We shall say that $\alpha$ is a {\em
fiber \mono} if $\alpha_x$ is injective for every $x\in X$.\ed

The simplest example of a sheaf of \pCgrp s is that of the {\em
constant sheaf}
 $(G\times X,{\rm pr}_X,X)$, where $G$ is some \pCgrp\ and
${\rm pr}_X:G\times X\longrightarrow X$ is the projection. For
every $x\in X$, the fiber $(G\times X)(x)=G\times \{ x\}$ is
isomorphic to $G$.

\bd{free product} Given a sheaf $(\cG,X)$ of \pCgrp s, the pro-$p$ product $G = \coprod_{x\in X}\cG(x)$ is a pro-$p$ group $G$
together with a fiber homomorphism
$\upsilon:(\cG,X)\longrightarrow G$, having the following universal
property: for every \pCgrp\ $K$ and every fiber homomorphism
$\beta:(\cG,X)\rightarrow K$, there exists a unique homomorphism
\[\omega:G\longrightarrow K,\]
such that  $\beta=\omega\upsilon$.\ed

When considering free pro-$p$ products, we make frequent use of the following:

\bt[(\cite[Theorem 9.1.12]{RZ12010} and \cite[Theorem 4.2]{RZ2}]\label{conjugacy}
Let $G=\coprod_{i=1}^n G_i$ be a free
profinite (pro-$p$) product. Then $G_i\cap G_j^g=1$ for either
$i\neq j$ or $g\not\in G_j$.

Every finite subgroup of $G$ is conjugate to a subgroup of
one of the factors $G_i$.
\et

Next we state the pro-$p$ analogue of the concept of an HNN-group,
given in \cite{HZ07} for pro-$\C$-groups.

\bd{HNN-grp} Let $H$ be a pro-$p$ group and
$\partial_0,\partial_1:(\cG,T)\rightarrow H$  fiber \mono s. A
specialization into $K$ consists of a
homomorphism $\beta:H\longrightarrow K$ and a continuous map
$\beta_1: T\longrightarrow K$ such that for all $t\in T$ and $g\in
\cG(t)$ the equality $\beta(\partial_{0}(h))= \beta_1(t)^{-1}\beta(\partial_{1}(h))\beta_1(t)$
is valid. Such a specialization into $K$ will be denoted $(\beta,\beta_1):(H,\cG,T)\to K$.

The pro-$p$ HNN-group  is then a pro-$p$ group $G$ together with
a specialization $(\upsilon,\upsilon_1):(H,\cG,T)\longrightarrow
G$, with the following universal property: for every
\pCgrp\ $K$ and every specialization
$(\beta,\beta_1):(H,\cG,T)\rightarrow K$, there exists a unique
homomorphism
\[\omega:G\longrightarrow K,\]
such that $\omega\upsilon_1=\beta_1$ and $\beta=\omega\upsilon$. We shall
denote the pro-$p$ HNN-group by $\HNN(H,\cG,T)$.  The group $H$ is called the {\em
base group},  and elements
$t\in T$ are  called the {\em stable letters}.
\ed
\medskip

Note that for $T$ a singleton set, identifying $\cG(t)$ with its
image under $\partial_0$ and setting $f:=\partial_1$, the
definition of a pro-$p$-\HNN\ extension given in \cite[Section
9.4]{RZ12010} is recovered.  By \cite[Proposition
9]{HZ07}, the pro-$p$ HNN-group $G=\HNN(H,\cG,T)$ exists and is
unique.

\medskip
A pro-$p$ HNN-group is a special case of the fundamental pro-$p$
group $\Pi_1(\cG,\Gamma)$ of a \pgraph\  of \pCgrp s
$(\cG,\Gamma)$ as introduced in \cite{ZM90}. Namely, a pro-$p$
HNN-group can be thought as $\Pi_1(\cG,\Gamma)$, where $\Gamma$ is
a connected profinite graph having just one
vertex.

For the rest of this section let $G$ be a  second countable
\vfppgrp, and fix an open \fpp\ normal subgroup $F$ of $G$ of
minimal index. Also suppose that the centralizer $C_F(t)=\ugp$ for
every torsion element $t\in G$. Let $K:=G/F$ and form the \fpp\
product $G_0:=G\amalg K$. Let $\psi:G\to K$ denote the canonical
projection. It extends to an \epi\ $\psi_0:G_0\to K$, by sending
$g\in G$ to $gF\in K$ and each $k\in K$ identically to $k$, and
extending to $G_0$ using the universal property of the \fpp\
product. Notice that the kernel $L$ of $\psi_0$ is an open
subgroup of $G_0$ and, since $L\cap G=F$ and $L\cap K=\ugp$, the
\pp\ version of the \KST\, \cite[Theorem 4.3]{M89} tells us that
$L$ is \fpp. By \cite[Lemma 5.6.7]{RZ12010} there exists  a
continuous section $Fin(G)/G\longrightarrow Fin(G)$.  Define the
subspace $\Sigma$ of $Fin(G_0)$ to be the union of the image of
this section with the subgroup $K$. Since by Theorem
\ref{conjugacy} all finite subgroups of $G_0$ can be conjugated
either into $G$ or into $K$, the natural map
$Fin(G_0)\longrightarrow Fin(G_0)/G_0$ restricted to $\Sigma$ is a
homeomorphism. Define a sheaf $(\cG,\Sigma)$ by setting
$\cG=\{(g,S)\in G_0\times \Sigma\mid g\in S\}$ and defining
$\gamma:\cG\longrightarrow \Sigma$ to be the restriction to $\cG$
of the natural projection $G_0\times \Sigma\longrightarrow
\Sigma$. Define an equivalence relation on $\Sigma$ by putting
$S_1\sim S_2$ if $S_1$ and $S_2$ are contained in the same maximal
finite subgroup of $G_0$.  This defines an equivalence relation
since maximal finite subgroups have trivial intersection
 by \cite[Lemma 9]{bulletin}, and it is easy to see that $\sim$ is
closed (indeed if $K_1, K_2$ are maximal finite subgroups
containing $S_1$ and $S_2$ respectively and such that $K_1\cap
K_2=1$, then there is a finite quotient of $G_0$ where the images
of $K_1$ and $K_2$ intersect trivially as well). Put
$T=\Sigma/\sim$. Given $t\in T$, denote by $K_t$ the unique
maximal finite subgroup corresponding to $t$ (i.e. if $S\in
\Sigma$ is a representative of an equivalence class $t$ then $K_t$
is the unique maximal finite subgroup containing $S$). Let
$(\cK,T)$ be the subset of $G_0\times T$ defined by
$(\cK,T)=\{(k,t)\mid k\in K_t, t\in T\}$.

\begin{lemma}  $(\cK,T)$ is a subsheaf of $G_0\times T$.\end{lemma}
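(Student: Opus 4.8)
The plan is to check directly that the triple $(\cK,\pr_T|_{\cK},T)$ satisfies the axioms of \prefdef{sheaf}, regarding it as a sub-object of the constant sheaf $(G_0\times T,\pr_T,T)$. Three of the requirements come essentially for free: the fibre over $t$ is $\cK(t)=\{(k,t)\mid k\in K_t\}\cong K_t$, a finite $p$-group; the projection $\pr_T|_{\cK}:\cK\to T$ is the restriction of a continuous projection and is onto because $1\in K_t$; and the multiplication $\mu_{\cK}\bigl((k,t),(k',t)\bigr)=(k^{-1}k',t)$ is the restriction of the (continuous) multiplication of the constant sheaf and does land in $\cK$ since each $K_t$ is a subgroup. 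The inclusion $\cK\hookrightarrow G_0\times T$ is the identity on $T$ and is fibrewise the subgroup inclusion $K_t\hookrightarrow G_0$, so it is a fibre-homomorphism and $\cK$ will indeed be a \emph{sub}sheaf. Thus the only genuine point is that $\cK$ is a \emph{closed} (hence profinite) subspace of $G_0\times T$.

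First I would record a uniform bound on finite subgroups. Since $L=\ker\psi_0$ is \fpp, it is \tf, so every finite subgroup $S\leqslant G_0$ meets $L$ trivially and therefore embeds into $G_0/L\cong K$; hence $|S|\leqslant|K|$ for all finite $S\leqslant G_0$. In particular ascending chains of finite subgroups stabilise, so every finite subgroup is contained in a maximal one, and by the trivial-intersection property of maximal finite subgroups (\cite[Lemma 9]{bulletin}) this maximal finite overgroup is \emph{unique}; for $S\in\Fin{G_0}$ write $K_S$ for it, so that $K_{q(S)}=K_S$, where $q:\Sigma\to T$ is the quotient map.

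The heart of the argument is then the closedness of $\cK$, which amounts to an upper semicontinuity statement for $S\mapsto K_S$. I would argue with nets, using that $G_0\times T$ is compact Hausdorff. Let $(k_i,t_i)\to(k,t)$ with $(k_i,t_i)\in\cK$, that is $k_i\in K_{t_i}=K_{S_i}$ for chosen representatives $S_i\in\Sigma$ with $q(S_i)=t_i$. By compactness of $\Sigma$ and of the space of closed subgroups of $G_0$ I may pass to a subnet with $S_i\to S_0$ in $\Sigma$ and $K_{S_i}\to M'$ for some closed subgroup $M'$. Working in each finite quotient $G_0/W$, where the relevant images are eventually constant, yields at once that $|M'|\leqslant|K|$, that $S_0\leqslant M'$, and that $k\in M'$. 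Finiteness of $M'$ together with $S_0\leqslant M'$ forces $M'\leqslant K_{S_0}$ by the uniqueness above; and since $t_i=q(S_i)\to q(S_0)$ while $t_i\to t$ and $T$ is Hausdorff, $t=q(S_0)$, so $K_t=K_{S_0}$. Therefore $k\in M'\leqslant K_t$, i.e. $(k,t)\in\cK$, proving $\cK$ closed.

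The main obstacle is precisely this last step: a priori a limit of maximal finite subgroups could shrink, and the argument goes through only because of the two structural inputs isolated above --- the uniform order bound coming from torsion-freeness of $L$, which guarantees that the limit $M'$ is again finite, and the trivial-intersection property, which pins $M'$ inside $K_{S_0}$. Equivalently, one may phrase the whole step as the closedness of $\Phi=\{(g,S)\in G_0\times\Sigma\mid g\in K_S\}$ and transport it to $\cK$ along $\id\times q:G_0\times\Sigma\to G_0\times T$, which is a quotient map because $G_0$ is compact. Once $\cK$ is known to be closed, the routine verifications listed in the first paragraph complete the proof that $(\cK,T)$ is a subsheaf of $G_0\times T$.
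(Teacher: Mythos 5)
Your proposal is correct and it isolates the same crux as the paper does --- everything reduces to showing that $\cK$ is closed in $G_0\times T$, the remaining sheaf axioms being routine --- but your proof of closedness takes a genuinely different route. The paper argues that the complement is open: for $(g,t)$ with $g\notin K_t$ it observes that the non-trivial torsion elements of $G_0$ form a closed set (they have bounded order and cannot accumulate at $1$, since $G_0$ is virtually torsion-free), and then exhibits a neighbourhood of the form $gU\times T$, with $U$ an open torsion-free subgroup, that misses $\cK$. That is far shorter than your argument, but note that it works literally only when $g$ is not a torsion element: if $g$ is a non-trivial torsion element lying in $K_{t_0}$ for some $t_0\neq t$, then $(g,t_0)\in(gU\times T)\cap\cK$ for \emph{every} $U$, so no neighbourhood of that form can avoid $\cK$ and one must also shrink in the $T$-direction --- exactly the case your argument handles. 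Your net-compactness proof --- the uniform bound $|S|\leqslant|K|$ coming from torsion-freeness of $L=\ker\psi_0$, convergence of $S_i$ and of $K_{S_i}$ in the compact space of closed subgroups, passage to finite quotients, and the trivial-intersection property of maximal finite subgroups pinning the limit $M'$ inside $K_{S_0}$ --- treats all points $(g,t)$ of the complement uniformly, and in effect establishes the upper semicontinuity of $S\mapsto K_S$, which is the real content of the lemma; your alternative packaging via closedness of $\Phi\subseteq G_0\times\Sigma$ and the closed (hence quotient) map $\id\times q$ is also correct, since everything in sight is compact Hausdorff. In short, the paper's approach buys brevity; yours buys a complete and robust argument, including the delicate torsion case, at the cost of invoking the subgroup-space machinery.
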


\begin{proof} We show only that $\cK$ is closed in $G_0\times T$, since the rest follows easily. Let $(g,t)\in G_0\times T$ be such that
$g\not\in K_t$. Note that  the set of non-trivial torsion elements
is closed in $G_0$, because they are of bounded order and can not
have $1$ as an accumulation point since $G$ is virtually
torsion-free. Therefore there is an open torsion free subgroup $U$
such that $(gU\times T)\cap \cK=\emptyset$ as needed. \end{proof}

\medskip

With notation as above we define the pro-$p$ \HNNgrp\ $\tG=HNN(G_0,\cK,T)$ by setting $\partial_0$ to
 be the fiber homomorphism that sends each $K_t$ identically to its
 copy in $G$ and setting $\partial_1(k_t)=\psi(k_t)$ for every $k_t\in K_t$. Here $G_0$ is the base group, the $K_t$ are
associated subgroups, and $T$ is a set of stable letters in
the sense of \prefdef{HNN-grp}.

\medskip

The objective of this section is to show that the centralizers of
torsion elements of $\tilde G$ are finite. It was already proved
in \cite[Theorem 12]{HZ07} that $G$ embeds into $\tilde G$ and
that the torsion elements of $\tilde G$ are conjugate to elements
of $K$ (in fact, \cite[Theorem 12]{HZ07} has a slightly different
statement, namely that the space $T$ is a subspace of $\Fin G$,
but this was not used in the proof).  Thus, in the following lemma
we need to prove only item (iv), since items (i)-(iii) are the
subject of the proof of \cite[Theorem 12]{HZ07}.

\bl{centr} Let $\tilde G= \HNN(G_0,K_t,\phi_t,T)$ be as explained above, and let $\tilde F$ be the kernel of the map $\tilde G\to K$ induced from the universal property of pro-$p$ HNN groups -- an open normal free pro-$p$ subgroup of $\tilde G$.  Then

\begin{enumerate}
 \item
 [(i)]  $G_0$ and therefore $G$ canonically embed in $\tilde G$.
 \item [(ii)] $\tilde G = \tilde F\rtimes K$ is a semidirect product.
 \item [(iii)] In $\tilde G$ every finite subgroup is conjugate to a subgroup of  $K$.
 \item[(iv)] $C_{\tilde F}(g)=1$ for every torsion
element $g\in \tilde G$.\end{enumerate}

\el

\begin{proof} (iv) There is a standard pro-$p$ tree $S:=S(\tilde G)$
associated to $\tilde G:= \HNN(G_0,K_t,\phi_t,T)$ on which
$\tilde G$ acts naturally such that the vertex stabilizers are
conjugates of $G_0$ and each edge stabilizer is a conjugate of
some $K_t$ (cf. \pcite{RZ2} and \cite[\S 3]{ZM90}).

\medskip

\noindent{\em Claim:
Let $e_1,e_2$ be two edges of $S$ with a common vertex
$v$ which is not the terminal vertex of both of them. Then the intersection of the stabilizers $\tilde
G_{e_1}\cap\tilde G_{e_2}$ is trivial.
}

\medskip

\claimproof By translating $e_1,e_2, v$ if necessary we may assume
that $G_0$ is the stabilizer of $v$. Then  we
have two cases:

1) $v$ is the initial vertex of $e_1$ and $e_2$. Then $\tilde
G_{e_1}=K_t^g$ and $\tilde G_{e_2}=K_{t'}^{g'}$ with $g,g'\in G_0$
and either $t\neq t'$ or $g\not\in K_tg'$ and by construction of
$\tilde G$ we have $K_t^g\neq K_{t'}^{g'}$ if $t\neq t'$. Suppose
that $K_t^g\cap K_{t'}^{g'}\neq\ugp$. Then, since $G_0=G\amalg K$,
we may apply Theorem \ref{conjugacy}, in order to deduce the
existence of $g_0\in G_0$ with $K_t^{gg_0}\cap K_{t'}^{g'g_0}\le
G_0$. Now by \cite[Lemma 9]{bulletin} two maximal finite
subgroups of $G_0$ have trivial intersection. So we have
$K_t^g\cap K_{t'}^{g'}=\ugp$, as needed.
\medskip

2) $v$ is the terminal vertex of $e_1$ and the initial vertex of
$e_2$. Then $\tilde G_{e_1}=K^g$ and $\tilde G_{e_2}=K_t^{g'}$ for
$g,g'\in G_0$ so they intersect trivially by the definition of
$G_0$ and Theorem \ref{conjugacy}. So the claim holds.

\bigskip

Now pick a torsion element $x\in \tilde G$ and a non-trivial
element $f\in\tilde F$ with $x^f=x$. Let $e$ be an edge of $S$
stabilized by $x$. Then $fe$ is also stabilized by $x$ and, since
by \cite[Theorem 3.7]{RZ2} the fixed set $S^x$ is a subtree, the
path $[e,fe]$ is fixed by $x$ as well. Note that a common vertex
of $e$ and $fe$ (if it exists) can be terminal to only one of them,
since $f$ can not stabilize any vertex. So $[e,fe]$ has at least
one vertex which is initial for its incident edge. Thus by the
claim $x=1$.\end{proof}

\section{Proof of the main theorem}

We require the following before turning to the key proposition.

\begin{lemma}\label{group-module connection} Let $G$ be a semidirect product of a \fppgrp\ $F$ with
a finite $p$-group\ $K$ such that every torsion element is
conjugate to a subgroup of $K$. Then
\begin{enumerate}
 \item
 [(i)] $G=(K_G)\rtimes F_0$, where $K_G$ is the normal closure of $K$ in $G$ and $F_0$ is free pro-$p$.
 \item[(ii)] The natural homomorphism $G\longrightarrow F_0$
 induces a natural epimorphism $M=F/[F,F]\longrightarrow
 F_0/[F_0,F_0]$ of $\Z_pK$-modules with kernel $I_KM$.
 \end{enumerate}
 \end{lemma}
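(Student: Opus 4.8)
The plan is to handle the two parts in turn, with part (i) carrying the real weight and part (ii) being a direct equivariant computation in the abelianization. For part (i) I would first record that $K_G$ contains every torsion element of $G$: each such element is conjugate into $K\leqslant K_G$, and $K_G$ is normal, so all conjugates of $K$ lie in $K_G$ as well. Hence the quotient $F_0:=G/K_G$ is torsion-free. Since $F$ is free pro-$p$ of finite index $|K|$ in $G$, the group $G$ is virtually free pro-$p$, and $K_G$ is precisely the normal closure of its finite subgroups. The crucial point is then to upgrade ``torsion-free'' to ``free pro-$p$'' for $F_0$; I would obtain this from the structure theory of virtually free pro-$p$ groups (their description via actions on pro-$p$ trees with finite stabilizers, all of which are here conjugate into the single maximal finite subgroup $K$), so that the induced action of $F_0=G/K_G$ has trivial stabilizers and $F_0$ is free pro-$p$. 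That the torsion hypothesis is essential is already visible in the pro-$2$ infinite dihedral group $\Z_2\rtimes K$, where two conjugacy classes of involutions occur and the corresponding quotient acquires torsion. Once $F_0$ is known to be free pro-$p$ it is projective, so the canonical epimorphism $G\to F_0$ splits; the image of a section is a complement to $K_G$, giving $G=K_G\rtimes F_0$.

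For part (ii) the natural homomorphism is $G\to F_0=G/K_G$, whose restriction to $F$ is the surjection $F\to F_0$ with kernel $F_1:=F\cap K_G$ (its image is $FK_G/K_G=G/K_G$ since $FK\subseteq FK_G$). Abelianising gives a surjection $M=F/[F,F]\to F_0/[F_0,F_0]$, and this is a map of $\Z_pK$-modules once one notes that $K$ acts on $M$ by conjugation while it acts trivially on $F_0$ (because $K\leqslant K_G$ maps to $1$ there), hence trivially on $F_0/[F_0,F_0]$. The kernel of $M\to F_0/[F_0,F_0]$ is the image $N$ of $F_1$ in $M$, and I would identify $N=I_KM$ by two inclusions. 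For $I_KM\subseteq N$: for $f\in F$ and $k\in K$ the element $f^{-1}\,{}^{k}f$ lies in $F$ and maps to $1$ in $F_0$ (as $k\mapsto 1$), hence lies in $F_1$, and its image in $M$ is $(k-1)\overline{f}$; as $f,k$ vary these elements generate $I_KM$.

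For the reverse inclusion $N\subseteq I_KM$ I would produce a continuous homomorphism that kills $F_1$ modulo $I_KM$. Writing each element of $G$ uniquely as $fk$ with $f\in F$, $k\in K$, define $\tilde q\colon G\to M_K:=M/I_KM$ by $\tilde q(fk)=\overline{f}+I_KM$. Using $(f_1k_1)(f_2k_2)=f_1\,{}^{k_1}f_2\,k_1k_2$ together with the triviality of the $K$-action on $M_K$, a short check shows $\tilde q$ is a homomorphism, and it is continuous as the composite of the continuous map $fk\mapsto f$ with $F\to M\to M_K$. Since $\tilde q(K)=0$ and $\ker\tilde q$ is normal, we get $K_G\subseteq\ker\tilde q$, whence $F_1=F\cap K_G\subseteq\ker\tilde q$; that is, the image of $F_1$ in $M_K$ is trivial, so $N\subseteq I_KM$. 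Combining the two inclusions yields $N=I_KM$, completing (ii). Note that this argument needs only $F_0=G/K_G$ and the decomposition $G=F\rtimes K$, not even the splitting from (i).

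The hard part is the freeness of $F_0$ in (i): torsion-freeness of $F_0$ is immediate, but the passage from torsion-free to free pro-$p$ requires the virtual-freeness input from the combinatorial theory of groups acting on pro-$p$ trees, and it is exactly there that the hypothesis that \emph{all} torsion is conjugate into the single subgroup $K$ is used (without it the quotient can be non-free, as the infinite dihedral example shows). Everything in part (ii), by contrast, is an elementary equivariant computation inside $F/[F,F]$.
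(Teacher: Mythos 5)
Your proposal follows the same overall route as the paper, and part (ii) is correct as written, but part (i) contains a genuine logical slip that you should repair. You assert that since $K_G$ contains every torsion element of $G$, ``hence the quotient $F_0:=G/K_G$ is torsion-free,'' and your closing paragraph repeats that ``torsion-freeness of $F_0$ is immediate,'' the remaining work being a ``passage from torsion-free to free.'' That inference is invalid: a quotient of a pro-$p$ group by a closed normal subgroup containing all of its torsion can perfectly well have torsion (already $\Z_p/p\Z_p\cong C_p$ shows this, $\Z_p$ being torsion-free), and even for the normal closure of the torsion, torsion-freeness of the quotient is not formal --- in this setting it is a \emph{consequence} of the freeness theorem, not a stepping stone toward it. Nor can the framing be rescued cheaply via Serre's theorem (torsion-free plus an open free pro-$p$ subgroup implies free), because exhibiting an open free pro-$p$ subgroup of $G/K_G$ is no easier than the original problem. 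Fortunately the slip is not load-bearing: the result you then invoke from the theory of pro-$p$ groups acting on pro-$p$ trees --- that the quotient of $G$ by the normal closure of the finite (vertex) stabilizers is free pro-$p$ --- yields freeness of $G/K_G$ directly, with no torsion-freeness input. This is exactly what the paper does, citing it as \cite[Proposition 1.7]{Z2003}; after that, both you and the paper obtain $G=K_G\rtimes F_0$ by projectivity of free pro-$p$ groups. Note also that making your sketch rigorous (that the quotient graph is again a pro-$p$ tree, that stabilizers of the induced action are images of stabilizers, and that a free action on a pro-$p$ tree forces freeness) amounts to reproving the cited proposition, so citing it is the cleaner course.

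Part (ii) of your proposal is correct and is in substance the paper's own computation, written out at the level of elements. The paper passes to $\bar G = F/[F,F]\rtimes K$, notes that $K_G$ maps onto the normal closure $K_{\bar G}$ of $K$ in $\bar G$, and reads off from a commutative square that the kernel of $M\to F_0/[F_0,F_0]$ is $[M,K]=I_KM$; your map $\tilde q:G\to M/I_KM$ is exactly the composite $G\to\bar G\to \bar G/K_{\bar G}$, and your two inclusions make explicit what the paper asserts diagrammatically. Your version has the minor advantages of not requiring the section $F_0$ to be chosen inside $F$ (a choice the paper does make) and of not using the splitting from (i) at all.
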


 \begin{proof} For any group $X$ we denote by $\tor{X}$ the set of torsion elements of $X$. By \cite[Proposition 1.7]{Z2003} $\torfactor G$ is free pro-$p$, so we can fix a section $F_0$ of $\torfactor G$ in $G$, proving
(i). In fact, we can choose $F_0$ to live inside $F$, as we may
since the restriction of $G\longrightarrow \torfactor G$ to $F$ is
an epimorphism.  Thus we can write $F/[F,F]=F_0/[F_0,F_0]\oplus L$
as a direct sum of $\Z_p$-modules, for some $\Z_p$-submodule $L$.
We claim that a profinite $\Z_p$-basis of $F_0/[F_0,F_0]$ is a
profinite free $\Z_pK$-basis of $U$.  Again note that $\torgp{G}$
coincides with the normal closure $K_G$ of $K$ in $G$. Thus we
have the following commutative diagram
$$\xymatrix{G\ar[r]\ar[d]& G/K_G\ar[d]\\
          \bar G=F/[F,F]\rtimes K\ar[r]& \bar G/K_{\bar G}}$$
of natural epimorphisms. The lower horizontal map restricted to
$F/[F,F]$ has kernel $[F/[F,F],K]$, which coincides with $I_KU$
($I_K$ the augmentation ideal of $\Z_pK$) when we view $U=F/[F,F]$
as a $\Z_pK$-module.\end{proof}

\bp{model1} Let $G$ be a semidirect product of a \fppgrp\ $F$ with
a finite $p$-group\ $K$ such that every torsion element is conjugate to a
subgroup of $K$. Suppose that $C_F(t)=\ugp$ holds for every
torsion element $t\in G$. Then $G=K\amalg F_0$ for a \fpp\ factor
$F_0$. \ep

\begin{proof}
Suppose that the proposition is false. Then there is a \cex\ with
$K$ having minimal order.  By \cite[Theorem 1.2]{Z2003}, when $K$
has order $p$ we have that $G=\coprod_{x\in X} (C_p\times
H_x)\amalg H_0$, where $H_x, H_0$ are free pro-$p$, therefore the
proposition is satisfied, and so we can suppose that $K$ is of
order at least $p^2$.

Let $H$ be a central subgroup of $K$ of order $p$. Then $F\rtimes
H$ satisfies the premises of the proposition and hence $F\rtimes
H$ is of the form $H\amalg F_1$ for some free factor $F_1$.  Let us
denote by bar passing to the quotient modulo the normal closure $H_G$ of $H$ in $G$.  By \cite[Proposition
1.7]{Z2003}
 $\bar F$ is free pro-$p$ and  $\overline{\tor{G}}=\tor{\bar
 G}$. It follows that $\bar G=\bar F\rtimes (K/H)$ and
 every torsion element in $\bar G$ can be lifted to a
conjugate of an element in $K$. So every torsion element in $\bar
G$ is conjugate to an element of $\bar K=K/H$ and we  deduce from
Theorem \ref{conjugacy} that $H_G=\torgp{FH}$. Thus  by the
minimality of $K$ we have \be{FH} \bar G=\bar K\amalg \bar F_0.
\ee for some free pro-$p$ group $\bar F_0$.

 Consider $U:=F/[F,F]$ as
a $\Z_pK$-module and let $I_H$ denote the augmentation ideal of
$\Z_pH$. Since $F\rtimes H=H\amalg F_1=\left(\coprod_{h\in
H}F_1^h\right)\rtimes H$, $H$ acts by permuting the free factors
$F_1^h$, so that $U$ is a free $\Z_pH$-module.
Passing in
\prefeq{FH} to the quotient modulo the commutator subgroup of
$\bar F=(F_0)_{\bar G}$,  one can see using Lemma
\ref{group-module connection} (ii) that $U/I_HU$ is a free
$\Z_p\bar K$-module.
%So by Lemma \ref{rootunity}, $U/U^H$ is a
%free $\Z_p(\zeta)\bar K$-module.
Now an application of Lemma \ref{inv iso coinv} and
Theorem \ref{infgenfree} shows that $U$ itself is
a free $\Z_p K$ lattice.

%We shall  show that $U$ is a free $\Z_p K$-module. Indeed, if any
%summand is not free, then it has the form $\Z_p[K/S]$ for some
%non-trivial subgroup $S$ of $K$, so that the restriction to $S$
%has a trivial direct summand.  Let $S'$ be a proper non-trivial
%subgroup of $K$ such that the restriction $U\res{S'}$ has a
%trivial summand. By the minimality of $K$ we have $FS'=F\rtimes
%S'=F_{S'}\amalg S'$, which implies that $U$ is a free
%$\Z_pS'$-module, a contradiction. Thus $U$ is a free
%$\Z_pK$-module.

 By part (ii) of Lemma \ref{group-module connection}, a closed free basis of
$F_0/[F_0,F_0]$ is a free $\Z_pK$ basis of $U$; this follows from
the fact that a closed subset of $U$ is a free $\Z_pK$-basis if
and only if its image in $U/I_KU$ is a free $\Z_p$-basis.

Consider $\tG:=K\amalg \tF_0$ with $\tF_0\cong \torfactor G$,
and define a \homo\ $\phi:\tG\to G$ by sending $K$ to $K$, $\tF_0$
to $F_0$ and extending to $\tG$ using the universal property of
the free pro-$p$ product.  The images under the Frattini quotient of $K$
and $F_0$ together generate the image of $G$ (since the image of
$K$ is equal to the image of $\torgp{G}$), and hence $G$ is
generated by $K$ and $F_0$.  Thus $\phi$ is an \epi.  Let $\tilde
F=\phi^{-1}(F)$, and note that as the subgroup $\tilde F$  of
$\tilde G$ is the kernel of a homomorphism onto $K$ that restricts
to the identity on $K$, we have $\tilde G = \tilde F\rtimes K$.
Since $\tilde F\rtimes K=K\amalg \tilde F_0=\left(\coprod_{k\in
K}\tilde F_0^k\right)\rtimes K$, $K$ acts by permuting the free
factors $\tilde F_0^k$, so that $\tilde F/[\tilde F,\tilde F]$ is
a free $\Z_pK$-module with basis the image of the basis of $\tilde
F_0$ in $\tilde F/[\tilde F,\tilde F]$.  On the other hand from
the paragraphs above we know that $U$ is a free module with basis
the image of the basis of $F_0$ in $U$. Therefore the kernel of
$\phi$ must be contained in $[\tF,\tF]$. But since $\tilde F$ and
$F$ are free this implies $\tilde F\cap \ker\phi=\ugp$. Since
$K\cap\ker\phi=\ugp$, we conclude that $\phi$ is an isomorphism,
as required. \end{proof}

\begin{proof}[of Theorem \ref{main}]
First form $\tG$ as described before \preflemma{centr}, in order to embed $G$ into a group $\tG$ whose finite subgroups have finite centralizers (by \preflemma{centr}), and, moreover, which has a single conjugacy class of maximal finite subgroups.  By
\prefprop{model1} the group $\tG$ is of the form $\tG=K\amalg F_0$
where $K$ is finite and $F_0$ is \fpp. One now deduces the result from the pro-$p$ version of the Kurosh subgroup theorem \cite[Theorem 4.3]{M89}.
\end{proof}

\section{Acknowledgments}

We express a big thank you to Peter Symonds, whose input has been of
great help to this project.

\end{document}